\numberwithin{equation}{section}
\theoremstyle{definition}
\newtheorem{thm}{Theorem}[section]
\newtheorem{lem}[thm]{Lemma}
\newtheorem{cor}[thm]{Corollary}
\newtheorem{prop}[thm]{Proposition}
\theoremstyle{definition}
\newtheorem{rem}[thm]{Remark}
\newtheorem{defn}[thm]{Definition}
\newtheorem{prob}[thm]{Problem}
\def\F{{\mathbb F}}
\def\G{{\mathbb G}}
\def\Q{{\mathbb Q}}
\def\Z{{\mathbb Z}}
\def\OO{{\mathcal O}}
\def\PP{{\mathbb P}}
\def\Br{\mathop{\mathrm{Br}}\nolimits}
\def\Gal{\mathop{\mathrm{Gal}}\nolimits}
\def\inv{\mathop{\mathrm{inv}}\nolimits}
\def\Jac{\mathop{\mathrm{Jac}}\nolimits}
\def\Ker{\mathop{\mathrm{Ker}}\nolimits}
\def\id{\mathop{\mathrm{id}}\nolimits}
\def\GL{\mathop{\mathrm{GL}}\nolimits}
\def\SL{\mathop{\mathrm{SL}}\nolimits}
\def\Sp{\mathop{\mathrm{Sp}}\nolimits}
\def\Sp{\mathop{\mathrm{Sp}}\nolimits}
\def\Pic{\mathop{\mathrm{Pic}}\nolimits}
\def\Spec{\mathop{\rm Spec}}
\def\Tr{\mathop{\text{\rm Tr}}\nolimits}
\def\deg{\mathop{\text{\rm deg}}\nolimits}
\def\chara{\mathop{\mathrm{char}}}
\renewcommand{\labelenumi}{(\arabic{enumi})}
\begin{document}
\title[Local-global principle for symmetric determinantal representations]
{The local-global principle for symmetric determinantal representations of smooth plane curves}
\author{Yasuhiro Ishitsuka}
\address{Department of Mathematics, Faculty of Science, Kyoto University, Kyoto 606-8502, Japan}
\email{yasu-ishi@math.kyoto-u.ac.jp}
\author{Tetsushi Ito}
\address{Department of Mathematics, Faculty of Science, Kyoto University, Kyoto 606-8502, Japan}
\email{tetsushi@math.kyoto-u.ac.jp}

\date{\today}
\subjclass[2010]{Primary 14H50; Secondary 11D41, 14F22, 14G17, 14K15, 14K30}

\keywords{plane curve, determinantal representation, local-global principle, theta characteristic}

\maketitle

\begin{abstract}
A smooth plane curve is said to admit a symmetric determinantal representation
if it can be defined by the determinant of a symmetric matrix with entries in
linear forms in three variables. We study the local-global principle for the existence of
symmetric determinantal representations of smooth plane curves over a global field
of characteristic different from two.
When the degree of the plane curve is less than or equal to three,
we relate the problem of finding
symmetric determinantal representations to more familiar Diophantine problems on
the Severi-Brauer varieties and mod 2 Galois representations,
and prove that the local-global principle holds for conics and cubics.
We also construct counterexamples to the local-global principle
for quartics using the results of Mumford, Harris, and Shioda on theta characteristics.
\end{abstract}


\section{Introduction}

Let $C \subset \PP^2_K$ be a smooth plane curve of degree $n \geq 1$ over a field $K$.
If there is a triple of symmetric matrices $(M_0,M_1,M_2)$ of size $n$
with entries in $K$ such that $C$ is defined by the equation
\[ \det\big(\,X_0 M_0 + X_1 M_1 + X_2 M_2\,\big) = 0, \]
we say $C$ admits a {\em symmetric determinantal representation} over $K$.
In this paper, we study the local-global principle for the existence of
symmetric determinantal representations of smooth plane curves
over a global field of characteristic different from two.
We prove that the local-global principle holds for smooth plane conics and smooth plane cubics.
We also construct counterexamples to the local-global principle for smooth plane quartics
using the results of Mumford, Harris, and Shioda on theta characteristics.

The following theorems are the main results of this paper.

\begin{thm}[see Theorem \ref{MainTheorem:LocalGlobal}]
\label{MainTheorem1}
Let $K$ be a global field of characteristic different from two,
and $C \subset \PP^2_K$ be a smooth plane conic or cubic
(i.e.,\ smooth plane curve of degree $n = 2$ or $3$) over $K$.
If $C$ admits a symmetric determinantal representation over
the completion $K_v$ for each place $v$ of $K$,
the smooth plane curve $C$ admits a symmetric determinantal representation over $K$.
\end{thm}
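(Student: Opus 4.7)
The plan is to treat the cases $n=2$ and $n=3$ separately, in each case first reducing the existence of a $K$-rational symmetric determinantal representation to a more familiar arithmetic question and then invoking the corresponding local-global principle.

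For $n=2$, I would use the classical dictionary identifying symmetric determinantal representations of a smooth plane conic $C$ with $K$-rational non-effective theta characteristics. Over the algebraic closure, any such theta characteristic is isomorphic to $\OO_{\PP^1}(-1)$, and the obstruction to descending it to $K$ is the Severi--Brauer class $[C] \in \Br(K)[2]$. Hence $C$ admits a symmetric determinantal representation over $K$ iff $C(K)\neq \emptyset$, and the desired local-global principle is then the Hasse--Minkowski theorem for ternary quadratic forms.

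For $n=3$, the canonical bundle of a smooth plane cubic $C$ is trivial, so theta characteristics are exactly elements of $\Pic^0(C)[2]$, and the non-effective ones are the non-trivial $2$-torsion line bundles. Invoking the same dictionary reduces the problem to the local-global principle for the existence of a non-trivial element of $\Jac(C)[2](K)$. The natural obstruction $\Jac(C)(K)/\Pic^0(C) \hookrightarrow \Br(K)$ causes no trouble here: since $C$ carries a $K$-rational divisor of degree $3$ coming from hyperplane sections, this obstruction is $3$-torsion, hence vanishes on the $2$-torsion level, so a non-trivial element of $\Jac(C)[2](K)$ does produce a genuine $K$-rational $2$-torsion line bundle on $C$. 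Now consider the mod $2$ Galois representation $\rho\colon G_K \to \Aut_{\F_2}(\Jac(C)[2]) \cong \GL_2(\F_2) \cong S_3$. The existence of a non-trivial $K$-rational $2$-torsion point is equivalent to $\rho(G_K)$ containing no $3$-cycle, because a subgroup of $S_3$ disjoint from $A_3\setminus\{e\}$ has order at most $2$ and each such subgroup fixes a non-zero vector of $\F_2^2$. The local hypothesis says precisely that $\rho(G_{K_v})$ contains no $3$-cycle for every $v$; by Chebotarev density every element of $\rho(G_K)$ is conjugate to some $\rho(\Frob_v)$ at an unramified place, so $\rho(G_K)$ itself contains no $3$-cycle, producing the required non-zero fixed vector.

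The substantial part of the argument is the geometric dictionary between symmetric determinantal representations and non-effective theta characteristics, together with the descent analysis that relates these to Severi--Brauer classes when $n=2$ and to rational $2$-torsion of $\Jac(C)$ when $n=3$; I expect this to be developed earlier in the paper. Once it is in place, the local-global step reduces to Hasse--Minkowski for conics and to the short Chebotarev-plus-group-theory argument above for cubics. I expect the main obstacle to lie in making this dictionary work uniformly over a field that need not have a rational point on $C$, especially at the level of descending line bundles without introducing spurious Brauer obstructions.
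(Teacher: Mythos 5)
Your proposal is correct and follows essentially the same route as the paper: the reduction of the $n=2$ case to rational points on the conic via the Brauer class (the paper's Proposition \ref{DeterminantalRepresentationConic} and the Albert--Brauer--Hasse--Noether injectivity, equivalent to your Hasse--Minkowski step), and the reduction of the $n=3$ case to a nontrivial rational $2$-torsion point of $\Jac(C)$ with the Brauer obstruction killed by the odd-degree hyperplane divisor (the paper's Propositions \ref{Proposition:RationalityLineBundle} and \ref{DeterminantalRepresentationCubic}), finished by Chebotarev plus the fixed-vector lemma for subgroups of $\GL_2(\F_2)$ (the paper's Lemma \ref{LemmaGL(2,2)}, which it presents as an equivalent rephrasing of its one-line argument that a cubic polynomial with local roots almost everywhere has a global root). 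The only difference is that the paper's versions of these local-global steps yield the slightly stronger statements of Theorem \ref{MainTheorem:LocalGlobal} (all but one place for conics, all but finitely many for cubics), but your argument fully covers the statement as given.
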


\begin{thm}[see Subsection \ref{Subsection:Counterexample}]
\label{MainTheorem2}
Let $K$ be a global field of characteristic different from two.
Let $C \subset \PP^2_K$ be a smooth plane quartic
(i.e.,\ smooth plane curve of degree 4)
such that the associated mod $2$ Galois representation
on the $2$-torsion points on the Jacobian variety $\Jac(C)$
\[
\rho_{C,2} \colon \Gal(K^{\mathrm{sep}}/K) \ \longrightarrow \ \Sp\big(\Jac(C)[2](K^{\mathrm{sep}})\,\big) \cong \Sp_{6}(\F_2)
\]
is surjective.
Then there is a finite extension $L/K$ such that
$C$ admits a symmetric determinantal representation over $L_w$ for each place $w$ of $L$,
and $C$ does not admit a symmetric determinantal representation over $L$.
\end{thm}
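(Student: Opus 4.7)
The plan is to convert the problem into a question about the action of $\Sp_6(\F_2)$ on the set of $36$ even theta characteristics of $C$, and then to construct $L$ by exhibiting a suitable subgroup of $\Sp_6(\F_2)$.

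First, I would invoke the correspondence (established earlier in the paper) identifying symmetric determinantal representations of a smooth plane quartic $C$ over a field $F \supset K$ with $F$-rational non-effective even theta characteristics $\theta$ on $C$ (so $2\theta \sim K_C$ and $h^0(\theta) = 0$). Since $g(C) = 3$, the curve $C_{K^{\sep}}$ has exactly $36$ even theta characteristics; denote this set by $\Theta^{+}$. The $G_K$-action on $\Theta^{+}$ factors through $\rho_{C,2}$ and is identified with the standard action of $\Sp_6(\F_2)$ on the $36$ quadratic refinements of Arf invariant zero of the Weil pairing on $\Jac(C)[2]$. Under the surjectivity hypothesis on $\rho_{C,2}$ (essentially a genericity statement on $C$, and in any case harmlessly enforceable by an initial finite base change) every $\theta \in \Theta^{+}$ is non-effective, so the whole question reduces to controlling $F$-rationality of points of $\Theta^{+}$.

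The counterexample is produced by choosing a finite extension $L/K$ such that the image $H := \rho_{C,2}(G_L) \subset \Sp_6(\F_2)$ satisfies
\begin{enumerate}
\item[(a)] $H$ fixes no element of $\Theta^{+}$, so $C$ admits no symmetric determinantal representation over $L$;
\item[(b)] the image of every decomposition subgroup $D_w \subset G_L$ in $\Sp_6(\F_2)$ fixes some element of $\Theta^{+}$, so $C$ admits a symmetric determinantal representation over $L_w$ for each place $w$ of $L$.
\end{enumerate}
At places $w$ unramified in the fixed field of $\Ker(\rho_{C,2})$, the image of $D_w$ is cyclic generated by Frobenius, so (b) reduces to the statement that every element of $H$ has a fixed point on $\Theta^{+}$. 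The finitely many remaining archimedean and ramified places are handled by absorbing them into $L$ via a further auxiliary finite base change chosen so as not to destroy (a); given an $H$ satisfying (a) and the cyclic version of (b), this clean-up step is standard.

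The crux is the purely group-theoretic assertion: there exists a subgroup $H \subset \Sp_6(\F_2)$ having no common fixed point on $\Theta^{+}$ yet such that every element of $H$ fixes some point of $\Theta^{+}$. This is exactly where the results of Mumford, Harris, and Shioda on the combinatorics of theta characteristics of smooth plane quartics enter: the rich $\Sp_6(\F_2)$-structure on $\Theta^{+}$, encoded via the $28$ bitangents, Aronhold systems, and Steiner complexes, is what allows one to exhibit such an $H$ explicitly and to verify both fixed-point properties. I expect condition (a) for a candidate $H$ to be the most delicate step, since a priori there is no preferred element of $\Theta^{+}$, and excluding a global fixed point of $H$ requires the genuine structural input from these classical theorems.
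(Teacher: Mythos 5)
Your overall strategy coincides with the paper's: reduce to non-effective (equivalently, even) theta characteristics, identify $\Theta^{+}$ with the $36$ quadratic forms of Arf invariant $0$ refining the Weil pairing, and then find a finite extension $L$ whose Galois image $H\subset\Sp_6(\F_2)$ has no global fixed point on $\Theta^{+}$ while every element of $H$ (hence every decomposition group at a good place, plus a clean-up base change at the finitely many bad places) does fix a point. This is exactly the paper's Proposition on the sufficient condition to violate the local-global principle.

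However, there is a genuine gap at what you yourself identify as the crux: you never produce the subgroup $H$, and you point to the wrong source for it. In the paper, Mumford's work only supplies the dictionary between theta characteristics and quadratic forms, and Harris--Shioda only supply explicit quartics with surjective $\rho_{C,2}$ (used for the Corollary, not for the Theorem); the existence of $H$ is a self-contained, elementary group-theoretic lemma with an explicit construction. Concretely, the paper decomposes $\F_2^{\oplus 6}=V_1\oplus V_2\oplus V_3$ with $\dim V_1=\dim V_2=\dim V_3=2$, realizes $V_1$ as $\F_4$ with $Q_1(x)=\Tr_{\F_2^{2}/\F_2}(\mathrm{N}(x))$-type forms (in general $\F_{2^{2m-4}}$ for $\Sp_{2m}$), takes $\sigma$ to be multiplication by a generator of the norm-one subgroup (fixed-point free on $V_1\setminus\{0\}$) and $\tau$ the field conjugation, and pairs $\tau$ with the swap $\eta$ of the two hyperbolic planes $V_2,V_3$; the group $H=\langle(\sigma,\id),(\tau,\eta)\rangle$ is dihedral of order $6$. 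Condition (a) then follows because any common fixed vector must lie in the diagonal of $V_2\oplus V_3$, where $Q_2\oplus Q_3$ vanishes, and condition (b) follows from Hilbert 90 applied to $\F_{2^{2m-4}}/\F_{2^{m-2}}$ together with surjectivity of squaring on a group of odd order. None of this uses bitangents, Aronhold systems, or Steiner complexes, and without some such construction your argument is incomplete precisely at its decisive step. A second, smaller gap: a Galois-invariant class in $\Theta^{+}$ is a priori only a rational point of the Picard scheme, and descending it to an actual line bundle over $L$ or $L_w$ meets a Brauer obstruction; the paper disposes of this by first passing to an extension $K'/K$ of degree $\le 4$ over which $C$ has a rational point (harmless since $\Sp_6(\F_2)$ has no proper subgroup of index $<28$), a step your reduction to ``$F$-rationality of points of $\Theta^{+}$'' silently skips.
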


By the results of Harris, and Shioda,
there are smooth plane quartics with surjective associated mod 2 Galois representations
over $\Q$ and over $\F_p(T)$ for $p \notin \{ 2,3,5,7,11,29,1229 \}$
(\cite[p.\ 721]{HarrisGalois}, \cite[Theorem 7]{Shioda}).
Hence we have the following corollary.

\begin{cor}[see Subsection \ref{Subsection:Counterexample}]
\label{Corollary:MainTheorem2}
Let $p$ be an integer which is either zero or a prime number different from
$2,3,5,7,11,29,1229$.
Then there is a global field $K$ of characteristic $p$ and
a smooth plane quartic $C \subset \PP^2_K$ such that
$C$ admits a symmetric determinantal representation over $K_v$ for each place $v$ of $K$,
and $C$ does not admit a symmetric determinantal representation over $K$.
\end{cor}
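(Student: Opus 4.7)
The plan is to combine Theorem \ref{MainTheorem2} with the explicit existence results of Harris and Shioda cited just before the corollary, so the proof will be essentially a one-line deduction packaging what those three ingredients already give.

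First I would fix a convenient ambient global field of characteristic $p$: take $K_0 = \Q$ if $p = 0$, and $K_0 = \F_p(T)$ if $p$ is a prime different from $2,3,5,7,11,29,1229$. Both are global fields of characteristic $p$. By \cite[p.~721]{HarrisGalois} in the number field case and by \cite[Theorem 7]{Shioda} in the function field case, there exists a smooth plane quartic $C_0 \subset \PP^2_{K_0}$ whose associated mod $2$ Galois representation $\rho_{C_0, 2}$ surjects onto $\Sp_6(\F_2)$. This is exactly the hypothesis of Theorem \ref{MainTheorem2}.

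Next I would apply Theorem \ref{MainTheorem2} to the pair $(K_0, C_0)$. This produces a finite extension $L/K_0$ such that the base change $C_0 \times_{K_0} L$ admits a symmetric determinantal representation over $L_w$ for every place $w$ of $L$, but not over $L$ itself. Set $K := L$ and $C := C_0 \times_{K_0} L$. Since a finite extension of a global field is again a global field of the same characteristic, $K$ is a global field of characteristic $p$, and base change preserves smoothness and the degree, so $C$ is a smooth plane quartic over $K$. The two conditions now read exactly as in the statement of the corollary.

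There is no serious obstacle; the only points worth flagging are that the surjectivity of $\rho_{C_0,2}$ is invoked only over $K_0$ (we do not need it to persist after base change to $L$, since Theorem \ref{MainTheorem2} has already done the work by the time we enlarge the field), and that the appeal to \cite{HarrisGalois} and \cite{Shioda} is precisely to make the input hypothesis of Theorem \ref{MainTheorem2} nonvacuous in the required characteristic.
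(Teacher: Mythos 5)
Your proposal is correct and follows the same route as the paper: produce a smooth plane quartic over $\Q$ or $\F_p(T)$ whose mod $2$ Galois representation surjects onto $\Sp_6(\F_2)$, and feed it into Theorem \ref{MainTheorem2}. The one step you gloss over is that the cited Harris--Shioda results concern the \emph{generic} family of plane quartics, so the paper additionally invokes Hilbert's irreducibility theorem (global fields being Hilbertian) to specialize to an actual quartic over $\Q$ or $\F_p(T)$; with that intermediate citation added, your argument coincides with the paper's.
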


\begin{rem}
In Theorem \ref{MainTheorem1},
we can replace ``for each place'' by ``for all but one places'' when $C$ is a conic.
Also, we can replace ``for each place'' by ``for all but finitely many places'' when $C$ is a cubic.
In Theorem \ref{MainTheorem2},
the assumption on the surjectivity of $\rho_{C,2}$ can be weakened
when $C$ has a $K$-rational point.
(See Theorem \ref{MainTheorem:LocalGlobal} and Subsection \ref{Subsection:Counterexample}.)
\end{rem}

\begin{rem}
\label{Remark:Characteristic2}
In this paper,
we only consider global fields of characteristic different from two.
The story is completely different in characteristic two.
The local-global principle {\em holds} 
for any smooth plane curves of any degree
over a global field of characteristic two
(\cite{IshitsukaItoCharacteristic2}).
\end{rem}

Concerning the local-global principle for the existence of
symmetric determinantal representations of smooth plane curves,
it seems interesting to study the following problems.

\begin{prob}
\begin{enumerate}
\item Are there counterexamples to the local-global principle
for smooth plane quartics over $\Q$ or $\F_p(T)$ for $p \neq 2$?
In other words, can we take
$K = \Q$ or $\F_p(T)$ for $p \neq 2$ in Corollary \ref{Corollary:MainTheorem2}?
\item Are there counterexamples to the local-global principle in degree $n \geq 5$?
\end{enumerate}
\end{prob}

We expect that the answers to the above problems are ``Yes,''
but we do not know how to prove them.
The reason is that we currently know very little about
the images of the associated mod 2 Galois representations of smooth plane curves of higher degree.
(See Remark \ref{Counterexample:Remark3}.)

Historically, finding symmetric determinantal representations of plane curves
is a classical problem in algebraic geometry,
which goes back to Hesse's work on plane cubics and quartics
(\cite{Hesse1}, \cite{Hesse2}, \cite[Chapter 4]{Dolgachev}).
If $K$ is algebraically closed of characteristic zero,
in 1902, Dixon proved the existence of symmetric determinantal representations
for generic plane curves of any degree (\cite{Dixon}).
Since then, this problem has been re-examined by many people
(\cite{Room}, \cite{Tyurin}, \cite{CookThomas}, \cite{Catanese},
\cite{Vinnikov1}, \cite{Vinnikov2}, \cite{BeauvillePrym}, \cite{Wall}, \cite{Meyer-Brandis}).
In 2000, Beauville systematically studied minimal resolutions of
coherent sheaves on the projective spaces,
and proved that all plane curves (including singular ones) admit symmetric determinantal representations
when $K$ is algebraically closed of characteristic zero.
The situation is quite different when $K$ is not algebraically closed.
In 1938, Edge studied the field of definition of
symmetric determinantal representations of the Fermat quartic and related curves
called {\em Edge's quartics} (\cite{Edge}).
In 2009, Wei Ho studied, among other things,
certain linear orbits of triples of matrices related to
symmetric determinantal representations of smooth plane curves
over a field of characteristic not dividing $3n(n-1)$ (\cite{Ho}).
The results of Beauville and Ho were generalized by the first author
to include the case of higher dimensional hypersurfaces over arbitrary fields (\cite{Ishitsuka}).
For symmetric determinantal representations of Fermat curves of prime degree
and the Klein quartic over $\Q$, see \cite{IshitsukaItoFermat}.

Let us give a sketch of the proof of our results.
The existence of a symmetric determinantal representation of a smooth plane curve $C$
of degree $n \leq 3$ is related to more familiar Diophantine problems.
When $n = 2$, it is related to the existence of a $K$-rational point on the conic
(Proposition \ref{DeterminantalRepresentationConic}).
When $n = 3$, it is related to the existence of a non-trivial $K$-rational $2$-torsion point on
the Jacobian variety $\Jac(C)$ (Proposition \ref{DeterminantalRepresentationCubic}).
We prove Theorem \ref{MainTheorem1} using these relations.
The proof of Theorem \ref{MainTheorem2} depends on a group theoretic
lemma on the action of subgroups of $\Sp_{2m}(\F_2)$
on quadratic forms over $\F_2$ (Lemma \ref{Lemma:GroupTheoreticLemma}).
We give a sufficient condition for a smooth plane curve of any degree
to violate the local-global principle in terms of the associated mod 2 Galois
representation (Proposition \ref{Proposition:CounterexampleSufficientCondition}).
For a smooth plane quartic whose associated mod 2 Galois representation is surjective,
it is not difficult to see that it satisfies the sufficient condition 
after taking a finite extension of the base field.

The outline of this paper is as follows:
In Section \ref{SectionCriterion},
we recall a relation between symmetric determinantal representations
and certain line bundles called {\em non-effective theta characteristics}.
In Section \ref{SectionPicardFunctor},
we recall the basic facts on the relative Picard functors and Picard schemes.
In Section \ref{SectionExamples},
we examine the case of $n \leq 3$ in some detail and prove a relation
between the existence of symmetric determinantal representations and other Diophantine problems.
Theorem \ref{MainTheorem1} is proved in Section \ref{SectionMainTheorem1}.
Finally, in Section \ref{SectionMainTheorem2},
after recalling a relation between theta characteristics and quadratic forms over $\F_2$,
we prove Theorem \ref{MainTheorem2}.

\subsection*{Notation}

For a field $K$,
an algebraic closure of it is denoted by $\overline{K}$,
and a separable closure of it is denoted by $K^{\mathrm{sep}}$.
A {\em global field} is a field isomorphic to a finite extension of $\Q$ or $\F_p(T)$,
where $p$ is a prime number, $\F_p$ is the finite field of order $p$, and $T$ is an indeterminate.
For a place $v$ of a global field $K$, the completion of $K$ at $v$ is denoted by $K_v$.
For a morphism of schemes $X \longrightarrow S$ and $S' \longrightarrow S$,
the base change $X \times_S S'$ is denoted by $X_{S'}$.
When $S = \Spec K$ and $S' = \Spec L$ for a field extension $L/K$,
the base change $X_{S'}$ is also denoted by $X \otimes_K L$ or $X_L$.

\section{Theta characteristics and symmetric determinantal representations}
\label{SectionCriterion}

We recall the definition of theta characteristics on proper smooth curves
and its relation to symmetric determinantal representations.
In this section, $K$ is a field of arbitrary characteristic.

\begin{defn}[\cite{MumfordTheta}]
\label{DefinitionTheta}
Let $C$ be a proper smooth geometrically connected curve over $K$.
\begin{enumerate}
\item A {\em theta characteristic} on $C$ is a line bundle $\mathcal{L}$ on $C$
satisfying $\mathcal{L} \otimes \mathcal{L} \cong \Omega^1_C$,
where $\Omega^1_C$ is the canonical sheaf on $C$.
\item A theta characteristic $\mathcal{L}$ on $C$ is {\em effective} (resp.\ {\em non-effective}) 
if $H^0(C,\mathcal{L}) \neq 0$ (resp.\ $H^0(C,\mathcal{L}) = 0$).
\item A theta characteristic $\mathcal{L}$ on $C$ is {\em even} (resp.\ {\em odd})
if $\dim_{K} H^0(C,\mathcal{L})$ is {\em even} (resp.\ {\em odd}).
\end{enumerate}
\end{defn}

\begin{thm}
\label{ExistenceCriterion}
Let $\iota \colon C \hookrightarrow \PP^2_K$ be a smooth plane curve  over $K$.
Then $C$ admits a symmetric determinantal representation over $K$ if and only if
there is a non-effective theta characteristic on $C$.
\end{thm}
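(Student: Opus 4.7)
The plan is to establish the equivalence via minimal graded free resolutions on $\PP^2_K$. The bridge is the following fact, provable by a standard Castelnuovo--Mumford regularity argument: for a line bundle $\mathcal{M}$ on $C$ with $H^0(C,\mathcal{M}(-1)) = H^1(C,\mathcal{M}) = 0$, the pushforward $\iota_*\mathcal{M}$ admits a minimal graded free resolution
\[
0 \longrightarrow \OO_{\PP^2}(-1)^n \xrightarrow{\ M\ } \OO_{\PP^2}^n \longrightarrow \iota_*\mathcal{M} \longrightarrow 0
\]
with $M$ an $n \times n$ matrix of linear forms; here $n = \dim_K H^0(C,\mathcal{M})$, and $\det M$ cuts out $C$ up to a scalar. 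Conversely, the cokernel of any such $M$ with $\det M$ defining $C$ is of the form $\iota_*\mathcal{M}$ by smoothness of $C$.

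For the direction $(\Rightarrow)$, start with a symmetric $M$ with $\det M$ defining $C$ and let $\mathcal{F}$ be the cokernel, so $\mathcal{F} \cong \iota_*\mathcal{M}$ for some line bundle $\mathcal{M}$ on $C$. Applying $\Homsheaf(-, \OO_{\PP^2})$ to the resolution and using $M^T = M$ produces $\Extsheaf^1(\mathcal{F}, \OO_{\PP^2}) \cong \mathcal{F}(1)$. Independently, Grothendieck--Serre duality for the closed immersion $\iota$ identifies the same $\Extsheaf^1$ with $\iota_*(\mathcal{M}^{-1} \otimes \OO_C(n))$, using that the normal bundle of $C$ in $\PP^2$ is $\OO_C(n)$. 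Comparing yields $\mathcal{M}^{\otimes 2} \cong \OO_C(n-1)$, so $\mathcal{L} := \mathcal{M}(-1)$ satisfies $\mathcal{L}^{\otimes 2} \cong \OO_C(n-3) \cong \Omega^1_C$ and is a theta characteristic. Twisting the resolution by $\OO_{\PP^2}(-1)$ and taking the long exact sequence of cohomology, $H^0(C, \mathcal{L}) = H^0(\PP^2, \mathcal{F}(-1)) = 0$ since the first two terms of the twisted resolution, namely $\OO_{\PP^2}(-2)^n$ and $\OO_{\PP^2}(-1)^n$, have vanishing degree-one and degree-zero cohomology respectively; hence $\mathcal{L}$ is non-effective.

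For the converse $(\Leftarrow)$, given a non-effective theta characteristic $\mathcal{L}$, set $\mathcal{M} := \mathcal{L}(1)$. Non-effectivity gives $H^0(C, \mathcal{M}(-1)) = H^0(C, \mathcal{L}) = 0$, while Serre duality on $C$ combined with $\mathcal{L}^{\otimes 2} \cong \Omega^1_C$ yields $H^1(C, \mathcal{M})^\vee \cong H^0(C, \mathcal{L}(-1))$, which vanishes because $\mathcal{L}(-1)$ embeds into $\mathcal{L}$ via any nonzero global section of $\OO_C(1)$. Riemann--Roch computes $\dim_K H^0(C, \mathcal{M}) = n$, so $\iota_*\mathcal{M}$ admits a resolution of the above shape for some matrix $M$ of linear forms. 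The main remaining obstacle, and the principal technical point, is to arrange $M$ to be symmetric. Reversing the duality computation of the forward direction furnishes a canonical isomorphism $\mathcal{F} \cong \Extsheaf^1(\mathcal{F}, \OO_{\PP^2})(-1)$, unique up to a scalar since $\End_{\OO_{\PP^2}}(\mathcal{F}) \cong K$, and by the uniqueness of minimal free resolutions it lifts to an invertible matrix $P \in \GL_n(K)$ satisfying $P M = M^T P^T$. To force $P$ to be symmetric rather than alternating, one traces the construction of the duality isomorphism and uses that the theta characteristic identification $\mathcal{L}^{\otimes 2} \cong \Omega^1_C$ is inherently symmetric under the swap of tensor factors, since for a line bundle $\mathcal{L}$ the symmetric square $\Sym^2\mathcal{L}$ coincides with $\mathcal{L}^{\otimes 2}$. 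A final change of basis $M \mapsto Q M Q^T$ reducing the non-degenerate symmetric matrix $P$ to the identity then produces a symmetric representative of $M$, completing the argument.
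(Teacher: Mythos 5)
The paper does not actually prove Theorem \ref{ExistenceCriterion}: its ``proof'' is a pointer to \cite{BeauvilleDeterminantal}, \cite{Dolgachev}, \cite{Ho} for characteristic $\neq 2$ and to \cite{Ishitsuka} for arbitrary fields. What you have written is a reconstruction of exactly that cited argument (Beauville's Theorem A together with his Proposition 4.2): linear resolutions of $\iota_*\mathcal{M}$ on $\PP^2_K$, Grothendieck--Serre duality for the closed immersion $\iota$, and the self-duality of the resolution induced by $\mathcal{L}\otimes\mathcal{L}\cong\Omega^1_C$. So the route is the intended one, and the forward direction is essentially complete. Two points, however, need repair.

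First, your bridge lemma is misstated: $H^0(C,\mathcal{M}(-1))=H^1(C,\mathcal{M})=0$ does \emph{not} imply a purely linear resolution of size $n$. For a general line bundle $\mathcal{M}$ of degree $5$ on a smooth plane quartic both groups vanish, yet $h^0(\mathcal{M})=3\neq 4$ and $h^1(\mathcal{M}(-1))=1$, so $\iota_*\mathcal{M}$ is not $0$-regular. The correct hypothesis is that $\mathcal{M}(-1)$ has no cohomology, i.e.\ $H^0(C,\mathcal{M}(-1))=H^1(C,\mathcal{M}(-1))=0$; in your application with $\mathcal{M}=\mathcal{L}(1)$ this is exactly $H^0(\mathcal{L})=0$ together with $H^1(\mathcal{L})\cong H^0(\mathcal{L})^\vee=0$ by Serre duality, so the application survives, but the lemma as stated is false. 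Second, and more seriously, the symmetry step is garbled. If you really have a constant invertible $P$ with $PM=M^{T}P^{T}$, then $PM$ is already a symmetric matrix of linear forms whose determinant cuts out $C$, and the final ``reduce $P$ to the identity by $M\mapsto QMQ^{T}$'' step is both unnecessary and beside the point. The genuine dichotomy sits one step earlier: a lift of the duality isomorphism $\varphi\colon\mathcal{F}\to\Extsheaf^1(\mathcal{F},\OO_{\PP^2})(-1)$ is a pair $(A,B)$ with $AM=M^{T}B$, and uniqueness of lifts between minimal resolutions gives $B=A^{T}$ (hence $AM$ symmetric) precisely when $\varphi$ equals its own transpose-dual, whereas $\varphi^{T}=-\varphi$ would force $AM$ alternating. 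Since $\End(\mathcal{F})\cong K$ one only knows $\varphi^{T}=\pm\varphi$ a priori, and ruling out the sign $-1$ is the technical heart of \cite{BeauvilleDeterminantal} and \cite{Ishitsuka}. Your appeal to the symmetry of the pairing $\mathcal{M}\otimes\mathcal{M}\to\OO_C(n-1)$ on a line bundle is the right idea, but you do not verify that the sheaf-theoretic transpose of $\varphi$ corresponds to the swap of tensor factors without a sign; as written this key step is asserted rather than proved. Note also that the theorem is stated over a field of arbitrary characteristic, and in characteristic two the $\pm$ dichotomy you implicitly rely on degenerates, which is precisely why the paper defers to \cite{Ishitsuka} there.
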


\begin{proof}
This result is well known when the characteristic of $K$ is different from two
(\cite[Proposition 4.2]{BeauvilleDeterminantal}, \cite[Ch 4]{Dolgachev}, \cite{Ho}).
Although the proofs in \cite{BeauvilleDeterminantal}, \cite{Ho} are
written under the additional assumptions on the base field,
it is not difficult to modify the arguments to cover the case of
characteristic two (\cite[Remark 2.2]{BeauvilleDeterminantal}).
For a proof of this proposition which works over arbitrary fields,
see \cite{Ishitsuka}.
\end{proof}

\begin{rem}
\label{Remark:EvenThetaCharacteristicQuartic}
Let $C$ be a smooth plane quartic over $K$.
It is well known that all even theta characteristics on $C$ are non-effective.
This can be seen as follows.
Assume that there is an effective even theta characteristic $\mathcal{L}$ on $C$.
Then $\mathcal{L}$ is isomorphic to $\OO_C(D)$ for an effective divisor $D$ of degree $g(C) - 1 = 2$
and we have $\dim_K H^0(C,\OO_C(D)) \geq 2$.
There is a non-constant rational function $f$ with $\mathop{\mathrm{div}}(f) + D \geq 0$.
Then $f$ defines a morphism $f \colon C \longrightarrow \PP^1_K$ of degree $2$.
It contradicts to the well-known fact that smooth plane quartics are non-hyperelliptic
(\cite[IV, Exercise 3.2]{Hartshorne}).
\end{rem}

\section{Relative Picard functors and Picard schemes}
\label{SectionPicardFunctor}

We recall the basic definitions and properties of
relative Picard functors and Picard schemes
(\cite{BoschLuetkebohmertRaynaud}, \cite{KleimanPicard}).

For a morphism of schemes $f \colon X \longrightarrow S$,
the {\em relative Picard functor} $\Pic_{X/S}$ is the fppf sheaf associated
with the functor
\[ \big( \mathrm{Schemes}/S \big)^{\mathrm{op}} \longrightarrow \big( \mathrm{Sets} \big),
\quad S' \mapsto \Pic(X_{S'}), \]
where the Picard group $\Pic(X_{S'})$ is the group of isomorphism classes of
line bundles on $X_{S'} := X \times_S S'$
(\cite[\S 8.1, Definition 2]{BoschLuetkebohmertRaynaud}).
If $f \colon X \longrightarrow S$ is quasi-compact, quasi-separated,
and $f_{\ast}(\OO_X) = \OO_S$ holds,
we have the following exact sequence for each {\em flat} $S$-scheme $S'$:
\begin{equation}
\label{PicardFunctor:ExactSequence1}
\xymatrix{
0 \ar[r] & \Pic(S')
  \ar[r] & \Pic(X_{S'})
  \ar[r] & \Pic_{X/S}(S')
  \ar[r] & \Br(S')
  \ar[r] & \Br(X_{S'}),
}
\end{equation}
where
\[ \Br(S') := H^2_{\mathrm{fppf}}(S',\G_m), \qquad
   \Br(X_{S'}) := H^2_{\mathrm{fppf}}(X_{S'},\G_m) \]
are the cohomological Brauer groups calculated in the fppf topology
(\cite[\S 8.1, Proposition 4]{BoschLuetkebohmertRaynaud}).
Since the sheaf $\G_m$ is representable by a smooth scheme,
the cohomological Brauer groups $\Br(S'), \Br(X_{S'})$
can be calculated using \'etale topology
(\cite[Th\'eor\`eme 11.7]{GrothendieckBrauerIII}).
In particular, when $S' := \Spec K$ is the spectrum of a field $K$,
the cohomological Brauer group $\Br(\Spec K)$ is isomorphic to
the Brauer group of $K$ defined by Galois cohomology (\cite[Ch.\ X, \S 4]{SerreLocalFields}),
i.e.,
\[ \Br(\Spec K) \cong \Br(K)
   := H^2 \big( \Gal(K^{\mathrm{sep}}/K),\,(K^{\mathrm{sep}})^{\times} \big). \]
When $S = S' = \Spec K$ for a field $K$,
the Picard group $\Pic(\Spec K)$ is trivial, and the exact sequence
(\ref{PicardFunctor:ExactSequence1}) becomes
\begin{equation}
\label{PicardFunctor:ExactSequence2}
\xymatrix{
0 \ar[r] & \Pic(X)
  \ar[r] & \Pic_{X/K}(K)
  \ar[r] & \Br(K)
  \ar[r] & \Br(X).
}
\end{equation}
We have $\Pic(X) = \Pic_{X/K}(K)$
if $\Br(K)$ is trivial {\em or} $X$ has a $K$-rational point
(\cite[\S 8.1, Proposition 4]{BoschLuetkebohmertRaynaud}).

When $X$ is a proper scheme over a field $K$, the relative Picard functor
$\Pic_{X/K}$ is representable by a scheme which is locally of finite type over $K$
(\cite[\S 8.2, Theorem 3]{BoschLuetkebohmertRaynaud}).
The scheme representing the functor $\Pic_{X/K}$ is
called the {\em Picard scheme}.

When $X$ is a proper smooth geometrically connected curve over $K$,
the identity component $\Jac(X) := \Pic^0_{X/K} \subset \Pic_{X/K}$
is called the {\em Jacobian variety}.
It is an abelian variety whose dimension is equal to the genus $g(X)$ of $X$
(\cite[\S 9.2, Proposition 3]{BoschLuetkebohmertRaynaud}).

\begin{rem}
\label{RemarkPicardFunctor}
For a proper scheme $X$ over a field $K$ whose geometric fiber is
connected and reduced, the following are well known.
\begin{enumerate}

\item
\label{RemarkPicardFunctor:BaseChange}
We have $\Pic_{X_L/L} \cong (\Pic_{X/K})_L$ because
the formation of Picard schemes commutes with base change
(\cite[Exercise 9.4.4]{KleimanPicard}).

\item
\label{RemarkPicardFunctor:Injection}
The map $\Pic(X) \longrightarrow \Pic(X_L)$ is injective for any field extension $L/K$.

\item
\label{RemarkPicardFunctor:Galois}
For a Galois extension $L/K$,
we have $\Pic_{X/K}(L)^{\Gal(L/K)} = \Pic_{X/K}(K)$.
But $\Pic(X_L)^{\Gal(L/K)} = \Pic(X)$ does not hold in general.
\end{enumerate}
\end{rem}

\begin{prop}
\label{Proposition:RationalityLineBundle}
Let $X$ be a proper scheme over a field $K$ whose geometric fiber is
connected and reduced.
Let $L/K$ be a Galois extension which is not necessarily finite.
Let $\mathcal{L}$ be a line bundle on $X_L$
such that its class
$[\mathcal{L}] \in \Pic(X_L)$
is fixed by the action of $\Gal(L/K)$.
Assume that {\em at least one} of the following conditions is satisfied:
\begin{enumerate}
\renewcommand{\labelenumi}{(\alph{enumi})}
\renewcommand{\theenumi}{{\rm\alph{enumi}}}
\item \label{Proposition:RationalityLineBundle:TrivialBrauerGroup}
  $\Br(K)$ is trivial,

\item \label{Proposition:RationalityLineBundle:ExistenceRationalPoint}
$X$ has a $K$-rational point,

\item \label{Proposition:RationalityLineBundle:ExistenceRationalPointAfterExtension}
there is a finite extension $M/K$ and an integer $r \geq 1$ prime to $[M : K]$
such that
$X$ has an $M$-rational point and
$[\mathcal{L}^{\otimes r}]$ comes from a line bundle on $X$, or

\item \label{Proposition:RationalityLineBundle:LocalExistence}
$K$ is a global field and
there is a place $v_0$ of $K$ such that,
for any place $v \neq v_0$ of $K$ and a place $w$ of $L$ above $v$,
$[\mathcal{L}_{L_w}]$ comes from a line bundle on $X_{K_v}$.
\end{enumerate}
Then $[\mathcal{L}]$ comes from a line bundle on $X$.
\end{prop}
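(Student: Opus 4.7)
The plan is to produce, from the hypotheses, a single Brauer class $\alpha \in \Br(K)$ which is the obstruction for $[\mathcal{L}]$ to descend, and then show that each of the four conditions forces $\alpha = 0$.

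First I would set up this obstruction class. By Remark \ref{RemarkPicardFunctor}(\ref{RemarkPicardFunctor:BaseChange}) we have $\Pic_{X_L/L}(L) = \Pic_{X/K}(L)$, and by Remark \ref{RemarkPicardFunctor}(\ref{RemarkPicardFunctor:Injection}) the map $\Pic(X_L) \to \Pic_{X/K}(L)$ is well-defined. Since $[\mathcal{L}]$ is Galois-fixed in $\Pic(X_L)$, its image in $\Pic_{X/K}(L)$ is Galois-fixed, hence by Remark \ref{RemarkPicardFunctor}(\ref{RemarkPicardFunctor:Galois}) it lies in $\Pic_{X/K}(K)$. Applying the exact sequence \eqref{PicardFunctor:ExactSequence2} I define $\alpha \in \Br(K)$ as its image; by exactness, $[\mathcal{L}]$ comes from a line bundle on $X$ if and only if $\alpha = 0$, and it suffices to treat the four cases separately.

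Case (\ref{Proposition:RationalityLineBundle:TrivialBrauerGroup}) is immediate: if $\Br(K) = 0$ then $\alpha = 0$. For case (\ref{Proposition:RationalityLineBundle:ExistenceRationalPoint}) a $K$-rational point $x\colon \Spec K \to X$ yields a retraction $x^* \colon \Br(X) \to \Br(K)$ of $\Br(K) \to \Br(X)$, so the latter map is injective; since $\alpha$ dies in $\Br(X)$ by exactness, we get $\alpha = 0$. (This is also how the last sentence after \eqref{PicardFunctor:ExactSequence2} is justified.) For case (\ref{Proposition:RationalityLineBundle:ExistenceRationalPointAfterExtension}) I would combine two facts: applying case (\ref{Proposition:RationalityLineBundle:ExistenceRationalPoint}) over $M$, the restriction $\alpha_M \in \Br(M)$ associated with $[\mathcal{L}]$ viewed over $M$ vanishes, so the standard identity $\mathrm{Cor}_{M/K}\circ \mathrm{Res}_{M/K} = [M:K]$ gives $[M:K]\cdot \alpha = 0$; on the other hand, the hypothesis that $[\mathcal{L}^{\otimes r}]$ already comes from a line bundle on $X$ means that $r\alpha = 0$ (the construction of $\alpha$ is additive in $\mathcal{L}$). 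As $\gcd(r,[M:K]) = 1$, this forces $\alpha = 0$.

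The main obstacle will be case (\ref{Proposition:RationalityLineBundle:LocalExistence}), which is the one actually used elsewhere in the paper; here I would invoke the Albert--Brauer--Hasse--Noether theorem for global fields, namely the exact sequence
\[
0 \ \longrightarrow \ \Br(K) \ \longrightarrow \ \bigoplus_v \Br(K_v) \ \xrightarrow{\ \sum \mathrm{inv}_v\ } \ \Q/\Z \ \longrightarrow \ 0.
\]
The hypothesis, applied place by place with a chosen place $w$ of $L$ above each $v \neq v_0$, says that the image of $[\mathcal{L}_{L_w}]$ in $\Pic_{X_{K_v}/K_v}(K_v)$ lifts to $\Pic(X_{K_v})$; by the naturality of \eqref{PicardFunctor:ExactSequence2} under base change $K \to K_v$, this exactly means that the localisation $\alpha_v \in \Br(K_v)$ vanishes for every $v \neq v_0$. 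The reciprocity relation $\sum_v \mathrm{inv}_v(\alpha) = 0$ then forces $\mathrm{inv}_{v_0}(\alpha_{v_0}) = 0$ as well, so $\alpha_v = 0$ at every place, and the injectivity part of Albert--Brauer--Hasse--Noether gives $\alpha = 0$, completing the proof.
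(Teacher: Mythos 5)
Your proof is correct and takes essentially the same route as the paper's: descend $[\mathcal{L}]$ to a class $\alpha_K \in \Pic_{X/K}(K)$ and show that its obstruction in $\Br(K)$ vanishes in each of the four cases. The paper merely declares (b) and (c) ``standard'' and reduces (d) to the injectivity of $\Br(K) \longrightarrow \bigoplus_{v \neq v_0} \Br(K_v)$; your retraction argument, the $\mathrm{Cor}\circ\mathrm{Res} = [M:K]$ argument, and the Albert--Brauer--Hasse--Noether sequence are exactly the details being alluded to.
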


\begin{proof}
Since the image of $[\mathcal{L}]$ in $\Pic_{X/K}(L)$ is fixed by $\Gal(L/K)$,
it comes from an element $\alpha_K \in \Pic_{X/K}(K)$.
We shall show that the image of $\alpha_K$ in $\Br(K)$ is trivial.
(\ref{Proposition:RationalityLineBundle:TrivialBrauerGroup}) is obvious;
(\ref{Proposition:RationalityLineBundle:ExistenceRationalPoint}),
(\ref{Proposition:RationalityLineBundle:ExistenceRationalPointAfterExtension})
are standard;
(\ref{Proposition:RationalityLineBundle:LocalExistence})
follows from the injectivity of the map
\[
\Br(K)\ \longrightarrow \bigoplus_{v \neq v_0} \Br(K_v).
\]
in global class field theory
(\cite[\S 9, \S 10]{TateGCFT}, \cite[Theorem 8.1.17]{NeukirchSchmidtWingberg}).
\end{proof}

We give an application of Picard schemes to theta characteristics.

\begin{prop}
\label{Proposition:ThetaDescent}
Let $C$ be a proper smooth geometrically connected curve over a field $K$,
and $\mathcal{L}$ a line bundle on $C$.
Let $L/K$ be an extension of fields,
and $\mathcal{L}_L$ be the pullback of $\mathcal{L}$ to $C_L$.
\begin{enumerate}
\item \label{Proposition:ThetaDescent:1}
$\mathcal{L}$ is a theta characteristic on $C$ if and only if
$\mathcal{L}_L$ is a theta characteristic on $C_L$.

\item \label{Proposition:ThetaDescent:2}
$\mathcal{L}$ is effective (resp.\ non-effective) if and only if
$\mathcal{L}_L$ is effective (resp.\ non-effective).
\end{enumerate}
\end{prop}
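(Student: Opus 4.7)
The plan is to reduce both statements to two standard facts: the injectivity of $\Pic(C) \to \Pic(C_L)$ under field extension (Remark \ref{RemarkPicardFunctor}(\ref{RemarkPicardFunctor:Injection})), and the compatibility of the formation of cohomology of coherent sheaves with flat base change. Since a field extension $L/K$ is faithfully flat and $C \to \Spec K$ is proper, both facts apply directly.

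For part (\ref{Proposition:ThetaDescent:1}), I would first observe that the canonical sheaf behaves well under base change: $\Omega^1_{C_L} \cong (\Omega^1_C)_L$, since $C_L \to C$ is \'etale (in fact arises from the flat morphism $\Spec L \to \Spec K$, which is formally unramified as far as differentials are concerned). If $\mathcal{L}^{\otimes 2} \cong \Omega^1_C$, pulling back and using $(\mathcal{L}^{\otimes 2})_L \cong \mathcal{L}_L^{\otimes 2}$ yields $\mathcal{L}_L^{\otimes 2} \cong \Omega^1_{C_L}$. Conversely, if $\mathcal{L}_L^{\otimes 2} \cong \Omega^1_{C_L}$, then the classes $[\mathcal{L}^{\otimes 2}]$ and $[\Omega^1_C]$ in $\Pic(C)$ have equal image in $\Pic(C_L)$, and the injectivity noted above forces $\mathcal{L}^{\otimes 2} \cong \Omega^1_C$ in $\Pic(C)$.

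For part (\ref{Proposition:ThetaDescent:2}), the key is flat base change for the proper morphism $C \to \Spec K$ applied to the coherent sheaf $\mathcal{L}$: this gives a natural isomorphism
\[
H^0(C_L,\mathcal{L}_L) \ \cong \ H^0(C,\mathcal{L}) \otimes_K L.
\]
Since $L$ is a nonzero $K$-module (in fact a field), tensoring with $L$ preserves nonvanishing and vanishing, so $H^0(C,\mathcal{L}) = 0$ if and only if $H^0(C_L,\mathcal{L}_L) = 0$. This is exactly the assertion in (\ref{Proposition:ThetaDescent:2}). There is no real obstacle here; the whole proof is essentially a bookkeeping exercise combining injectivity of the Picard map under field extension with flat base change of cohomology.
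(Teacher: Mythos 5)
Your proof is correct and follows essentially the same route as the paper: part (\ref{Proposition:ThetaDescent:1}) via injectivity of $\Pic(C) \to \Pic(C_L)$ and part (\ref{Proposition:ThetaDescent:2}) via the flat base change isomorphism $H^0(C_L,\mathcal{L}_L) \cong H^0(C,\mathcal{L}) \otimes_K L$, which is exactly the dimension equality the paper invokes. One small correction: your justification that $\Omega^1_{C_L} \cong (\Omega^1_C)_L$ because ``$C_L \to C$ is \'etale'' (or because $\Spec L \to \Spec K$ is formally unramified) is not right in general --- for a transcendental or inseparable extension $L/K$ one has $\Omega^1_{L/K} \neq 0$ and $C_L \to C$ is not \'etale; the correct (and standard) reason is that relative K\"ahler differentials $\Omega^1_{C/K}$ commute with arbitrary base change of the ground field, so the conclusion you need still holds.
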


\begin{proof}
(\ref{Proposition:ThetaDescent:1})
\ The element
$[\mathcal{L}_L]$ (resp.\ $[\Omega^1_{C_L}]$)
is the image of
$[\mathcal{L}]$ (resp.\ $[\Omega^1_C]$)
by the canonical map $\Pic(C) \longrightarrow \Pic(C_L)$.
Since this map is injective by Remark \ref{RemarkPicardFunctor} (\ref{RemarkPicardFunctor:Injection}),
we see that $2 [\mathcal{L}] \ = \ [\Omega^1_C]$ holds if and only if
$2 [\mathcal{L}_L] \ = \ [\Omega^1_{C_L}]$ holds.

\vspace{0.1in}

\noindent
(\ref{Proposition:ThetaDescent:2}) \ This follows from the equality
$\dim_K H^0(C,\mathcal{L}) = \dim_L H^0(C_L,\mathcal{L}_L)$.
\end{proof}

\section{Symmetric determinantal representations in degree $n \leq 3$}
\label{SectionExamples}

We examine the existence of symmetric determinantal representations of
smooth plane curves of degree $n \leq 3$ in some detail.
In this section, we fix a field $K$ of arbitrary characteristic.
Let $C \subset \PP^2_K$ be a smooth plane curve of degree $n \leq 3$.

\subsection{Lines ($n=1$)}
\label{Subsection:Lines}

Obviously, every line over $K$ admits a symmetric determinantal representation over $K$.
In order to illustrate the methods of this paper,
let us confirm it using Theorem \ref{ExistenceCriterion}.
The plane curve $C \subset \PP^2_K$ of degree $1$ is isomorphic to $\PP^1_K$ over $K$.
The Picard group of $\PP^1_K$ is isomorphic to $\Z$ generated by $[\OO_{\PP^1_K}(1)]$.
Since $\deg \Omega^1_{\PP^1_K} = -2$, the line bundle
$\OO_{\PP^1_K}(-1)$ is a unique theta characteristic on $\PP^1_K$, up to isomorphism.
It is non-effective because its degree is negative.
Hence $C$ admits a symmetric determinantal representation over $K$
by Theorem \ref{ExistenceCriterion}.

\subsection{Conics ($n=2$)}
\label{Subsection:Conics}

There is a natural map from the set of smooth plane conics over $K$
to the set of elements of $\Br(K)$ killed by $2$.
There are several ways to construct it.
The following method seems most suitable for our purposes:
since $C$ is smooth over $K$, the curve $C$ has a $K^{\mathrm{sep}}$-rational point
(\cite[\S 2.2, Corollary 13]{BoschLuetkebohmertRaynaud}).
Hence $C_{K^{\mathrm{sep}}}$ is isomorphic to $\PP^1_{K^{\mathrm{sep}}}$ over $K^{\mathrm{sep}}$.
The Picard group of $\PP^1_{K^{\mathrm{sep}}}$ is isomorphic to $\Z$ generated by
a line bundle of degree 1.
Since the action of $\Gal(K^{\mathrm{sep}}/K)$ on $\Pic_{C/K}(K^{\mathrm{sep}})$
does not change the degree of line bundles,
we have the following isomorphisms:
\[ \Pic_{C/K}(K) = \Pic_{C/K}(K^{\mathrm{sep}})^{\Gal(K^{\mathrm{sep}}/K)}
  = \Pic_{C/K}(K^{\mathrm{sep}})
  \overset{\mathrm{deg}}{\cong} \Z. \]
(See Remark \ref{RemarkPicardFunctor} (\ref{RemarkPicardFunctor:Galois})
for the first equality.)
There is a unique element $s \in \Pic_{C/K}(K)$ of degree 1.
We define $\alpha_C \in \Br(K)$
to be the image of $s$ in $\Br(K)$ by the following exact sequence
(cf.\ (\ref{PicardFunctor:ExactSequence2})):
\[
\xymatrix{
0 \ar[r] & \Pic(C)
  \ar[r] & \Pic_{C/K}(K)
  \ar[r] & \Br(K).
}
\]
Since $\deg \Omega^1_C = -2$,
we see that $\alpha_C$ is trivial if and only if $C$ has a line bundle of odd degree.
The element $\alpha_C \in \Br(K)$ is killed by $2$.

\begin{prop}
\label{DeterminantalRepresentationConic}
The following are equivalent:
\begin{enumerate}
\renewcommand{\labelenumi}{(\alph{enumi})}
\renewcommand{\theenumi}{{\rm\alph{enumi}}}
\item \label{DeterminantalRepresentationConic:Condition1}
  $C$ admits a symmetric determinantal representation over $K$.
\item \label{DeterminantalRepresentationConic:Condition2}
  $C$ is isomorphic to $\PP^1_K$ over $K$.
\item \label{DeterminantalRepresentationConic:Condition3}
  $C$ has a $K$-rational point.
\item \label{DeterminantalRepresentationConic:Condition4}
  $C$ has a line bundle of odd degree.
\item \label{DeterminantalRepresentationConic:Condition5}
  $\alpha_C \in \Br(K)$ is trivial.
\end{enumerate}
\end{prop}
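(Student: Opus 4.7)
The plan is to establish the five equivalences through a short chain of implications whose only non-trivial ingredients are Theorem \ref{ExistenceCriterion} (relating symmetric determinantal representations to non-effective theta characteristics), the exact sequence (\ref{PicardFunctor:ExactSequence2}) defining $\alpha_C$, and classical facts about smooth curves of genus zero.

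First I would show that (b), (c), (d) are equivalent by standard arguments. The implications (b) $\Rightarrow$ (c) and (c) $\Rightarrow$ (d) are immediate, the latter by taking $\OO_C(P)$ for a $K$-rational point $P$. For (c) $\Rightarrow$ (b) I would use projection from a $K$-rational point of the degree two curve $C$, which gives a birational, hence isomorphic, map to $\PP^1_K$. For (d) $\Rightarrow$ (c), I would first note that since $\deg \Omega^1_C = -2$, the existence of a line bundle of odd degree is equivalent to the existence of a line bundle $\L$ of degree one on $C$; Riemann--Roch on the genus zero curve $C$ then gives $\dim_K H^0(C,\L) \geq 2$ (the $H^1$ vanishes by Serre duality since $\deg(\Omega^1_C \otimes \L^{-1}) = -3 < 0$), and a nonzero section cuts out an effective closed point of degree one, i.e.\ a $K$-rational point.

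Next, (d) $\Leftrightarrow$ (e) is read off directly from (\ref{PicardFunctor:ExactSequence2}) together with the construction recalled in Subsection \ref{Subsection:Conics}: the unique element $s \in \Pic_{C/K}(K)$ of degree one generates $\Pic_{C/K}(K) \cong \Z$, so a line bundle of odd degree on $C$ exists if and only if $s$ lifts to $\Pic(C)$, which by exactness happens if and only if $\alpha_C = 0$.

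Finally, to bring in (a), I would invoke Theorem \ref{ExistenceCriterion}: condition (a) is equivalent to the existence of a non-effective theta characteristic on $C$. If (b) holds, then $\OO_{\PP^1_K}(-1)$ is a non-effective theta characteristic on $\PP^1_K$ (its square is $\Omega^1_{\PP^1_K}$ and negative degree forces non-effectivity), giving (b) $\Rightarrow$ (a). Conversely, any theta characteristic on $C$ has degree $(\deg \Omega^1_C)/2 = -1$, which is odd, so (a) $\Rightarrow$ (d). I do not expect any serious obstacle here; the only delicate point is the descent of line bundles from $\Pic_{C/K}(K)$ to $\Pic(C)$, and this is precisely what $\alpha_C$ measures through the exact sequence already recorded.
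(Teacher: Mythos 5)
Your proposal is correct and follows essentially the same route as the paper: the equivalence (d) $\Leftrightarrow$ (e) is read off from the construction of $\alpha_C$ and the exact sequence (\ref{PicardFunctor:ExactSequence2}), the cycle among (b), (c), (d) rests on Riemann--Roch for the degree-one line bundle forced by $\deg \Omega^1_C = -2$, and (a) is tied in via Theorem \ref{ExistenceCriterion} using the observation that theta characteristics on a conic are exactly the degree $-1$ line bundles, which are automatically non-effective. The only cosmetic difference is that you close the cycle through (d) $\Rightarrow$ (c) $\Rightarrow$ (b) (zero divisor of a section, then projection) where the paper goes directly (d) $\Rightarrow$ (b) via the complete linear system; the content is identical.
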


\begin{proof}
The equivalence
$(\ref{DeterminantalRepresentationConic:Condition4})
\Leftrightarrow (\ref{DeterminantalRepresentationConic:Condition5})$
follows from the construction of $\alpha_C \in \Br(K)$ recalled as above.
The implications
$(\ref{DeterminantalRepresentationConic:Condition2})
\Rightarrow (\ref{DeterminantalRepresentationConic:Condition3})
\Rightarrow (\ref{DeterminantalRepresentationConic:Condition4})$
are obvious.
If there is a line bundle $\mathcal{L}$ on $C$ of odd degree,
there is a line bundle $\mathcal{L}'$ on $C$ of degree 1 because
$\deg \Omega^1_C = -2$.
The complete linear system of $\mathcal{L}'$ gives an isomorphism $C \cong \PP^1_K$
by Riemann-Roch (\cite[Proposition 7.4.1]{Liu}).
Hence
$(\ref{DeterminantalRepresentationConic:Condition4})
\Rightarrow (\ref{DeterminantalRepresentationConic:Condition2})$
follows.
Finally,
by Proposition \ref{Proposition:ThetaDescent},
a line bundle $\mathcal{L}$ on $C$ is a theta characteristic
if and only if $\deg \mathcal{L} = -1$.
It is non-effective because $\deg \mathcal{L}$ is negative.
By Theorem \ref{ExistenceCriterion},
the existence a line bundle $\mathcal{L}$ with $\deg \mathcal{L} = -1$
is equivalent to the existence of a symmetric determinantal representation of $C$ over $K$.
Hence
$(\ref{DeterminantalRepresentationConic:Condition4})
\Leftrightarrow (\ref{DeterminantalRepresentationConic:Condition1})$
follows.
\end{proof}

\subsection{Cubics ($n=3$)}
\label{Subsection:Cubics}

Since $C \subset \PP^2_K$ is a smooth plane cubic,
the Jacobian variety $\Jac(C)$ is an elliptic curve over $K$.
It is well known that $C$ is isomorphic to $\Jac(C)$
if and only if $C$ has a $K$-rational point.

\begin{prop}
\label{DeterminantalRepresentationCubic}
The following are equivalent:
\begin{enumerate}
\renewcommand{\labelenumi}{(\alph{enumi})}
\renewcommand{\theenumi}{{\rm\alph{enumi}}}
\item \label{DeterminantalRepresentationCubic:Condition1}
  $C$ admits a symmetric determinantal representation over $K$.
\item \label{DeterminantalRepresentationCubic:Condition2}
  $\Jac(C)$ has a non-trivial $K$-rational $2$-torsion point.
\end{enumerate}
\end{prop}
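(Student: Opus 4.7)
The plan is to translate the statement into one about theta characteristics using Theorem \ref{ExistenceCriterion}, then use the Picard-scheme machinery of Section \ref{SectionPicardFunctor} to identify non-effective theta characteristics with non-trivial $K$-rational $2$-torsion of $\Jac(C)$.

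First I would observe that, since $C$ is a smooth plane cubic, it has genus $1$, so $\deg \Omega^1_C = 0$ and $\dim_K H^0(C,\Omega^1_C) = 1$. A degree $0$ line bundle admitting a non-zero global section is trivial, so $\Omega^1_C \cong \OO_C$. Hence a theta characteristic on $C$ is exactly a line bundle $\mathcal{L} \in \Pic(C)$ with $\mathcal{L}^{\otimes 2} \cong \OO_C$, which in particular has degree $0$. The same degree reason shows that $\mathcal{L}$ is effective if and only if $\mathcal{L} \cong \OO_C$. Combined with Theorem \ref{ExistenceCriterion}, condition (\ref{DeterminantalRepresentationCubic:Condition1}) is equivalent to the existence of a non-trivial $2$-torsion element in $\Pic^0(C) := \Pic(C) \cap \Pic^0_{C/K}(K)$.

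Next, from the exact sequence (\ref{PicardFunctor:ExactSequence2}) restricted to the degree $0$ component, one gets an injection $\Pic^0(C) \hookrightarrow \Pic^0_{C/K}(K) = \Jac(C)(K)$, and hence an inclusion $\Pic^0(C)[2] \hookrightarrow \Jac(C)(K)[2]$. This makes the implication $(\ref{DeterminantalRepresentationCubic:Condition1}) \Rightarrow (\ref{DeterminantalRepresentationCubic:Condition2})$ immediate: any non-effective theta characteristic provides a non-trivial $K$-rational $2$-torsion point on $\Jac(C)$.

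The main obstacle is the reverse implication $(\ref{DeterminantalRepresentationCubic:Condition2}) \Rightarrow (\ref{DeterminantalRepresentationCubic:Condition1})$, i.e., showing that every $K$-rational $2$-torsion point of $\Jac(C)$ lifts to an actual line bundle on $C$, despite the possibly non-trivial obstruction in $\Br(K)$. Let $P \in \Jac(C)(K)[2]$. Over $K^{\mathrm{sep}}$ the Brauer obstruction vanishes, so $P$ is represented by a line bundle $\mathcal{L}$ on $C_{K^{\mathrm{sep}}}$ with $\mathcal{L}^{\otimes 2} \cong \OO_{C_{K^{\mathrm{sep}}}}$, whose isomorphism class is $\Gal(K^{\mathrm{sep}}/K)$-fixed. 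To descend $[\mathcal{L}]$ to $\Pic(C)$, I would apply condition (\ref{Proposition:RationalityLineBundle:ExistenceRationalPointAfterExtension}) of Proposition \ref{Proposition:RationalityLineBundle}: intersecting $C \subset \PP^2_K$ with any $K$-rational line not contained in $C$ yields an effective divisor of degree $3$, whose support contains a closed point whose residue field $M$ satisfies $[M:K] \in \{1,3\}$, in particular odd. Taking $r = 2$, one has $\gcd(r,[M:K]) = 1$, and the class $[\mathcal{L}^{\otimes 2}] = [\OO_{C_{K^{\mathrm{sep}}}}]$ trivially descends to $\OO_C$ on $C$, so the hypothesis of (\ref{Proposition:RationalityLineBundle:ExistenceRationalPointAfterExtension}) is met. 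Hence $[\mathcal{L}]$ comes from a line bundle on $C$, necessarily a non-trivial $2$-torsion element of $\Pic^0(C)$ (non-trivial because its image $P$ in $\Jac(C)(K)$ is non-zero). By Theorem \ref{ExistenceCriterion} and the first paragraph, $C$ admits a symmetric determinantal representation over $K$.
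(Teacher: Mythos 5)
Your proposal is correct and follows essentially the same route as the paper: reduce via Theorem \ref{ExistenceCriterion} to the existence of a non-trivial $2$-torsion line bundle (using $\Omega^1_C \cong \OO_C$), and kill the Brauer obstruction in the direction $(\ref{DeterminantalRepresentationCubic:Condition2}) \Rightarrow (\ref{DeterminantalRepresentationCubic:Condition1})$ by invoking Proposition \ref{Proposition:RationalityLineBundle} (\ref{Proposition:RationalityLineBundle:ExistenceRationalPointAfterExtension}) with $r=2$ and an odd-degree point obtained from the odd degree of $C$. Your write-up merely makes explicit some steps the paper leaves implicit (triviality of $\Omega^1_C$, the line-intersection argument producing the odd-degree extension $M/K$).
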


\begin{proof}
Since $\Omega^1_C$ is trivial,
by Theorem \ref{ExistenceCriterion} and Proposition \ref{Proposition:ThetaDescent},
$C$ admits a symmetric determinantal representation over $K$
if and only if
there is a non-trivial line bundle $\mathcal{L}$ on $C$
satisfying $\mathcal{L} \otimes \mathcal{L} \cong \OO_C$.

\vspace{0.1in}

\noindent
$(\ref{DeterminantalRepresentationCubic:Condition1})
\Rightarrow (\ref{DeterminantalRepresentationCubic:Condition2})$
\ If $\mathcal{L}$ is a non-trivial line bundle
satisfying $\mathcal{L} \otimes \mathcal{L} \cong \OO_C$,
its class $[\mathcal{L}] \in \Jac(C)(K)$ is a non-trivial $K$-rational $2$-torsion point.

\vspace{0.1in}

\noindent
$(\ref{DeterminantalRepresentationCubic:Condition2})
\Rightarrow (\ref{DeterminantalRepresentationCubic:Condition1})$
\ Let $\alpha \in \Jac(C)(K)$
be a non-trivial $K$-rational $2$-torsion point.
There is a finite extension $M/K$ of odd degree with $C(M) \neq \emptyset$
because $C$ has odd degree.
Hence $\alpha$ comes from a line bundle $\mathcal{L}_{\alpha}$ on $C$
by Proposition \ref{Proposition:RationalityLineBundle}
(\ref{Proposition:RationalityLineBundle:ExistenceRationalPointAfterExtension})
for $r = 2$.
\end{proof}

\section{The local-global principle for conics and cubics}
\label{SectionMainTheorem1}

The following theorem is slightly more general than Theorem \ref{MainTheorem1}.

\begin{thm}
\label{MainTheorem:LocalGlobal}
Let $K$ be a global field of characteristic different from two,
and $C \subset \PP^2_K$ be a smooth plane curve of degree $2$ or $3$ over $K$.
\begin{enumerate}
\item \label{MainTheorem:LocalGlobal:Conic}
Assume that $C$ has degree $2$.
If there is a place $v_0$ of $K$ such that
$C$ admits symmetric determinantal representations over $K_v$ for {\em all places}
$v \neq v_0$ of $K$,
the smooth plane curve $C$ admits a symmetric determinantal representation over $K$.

\item \label{MainTheorem:LocalGlobal:Cubic}
Assume that $C$ has degree $3$.
If $C$ admits symmetric determinantal representations over
$K_v$ for {\em all but finitely many places} $v$ of $K$,
the smooth plane curve $C$ admits a symmetric determinantal representation over $K$.
\end{enumerate}
\end{thm}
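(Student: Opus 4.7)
The plan is to reduce each part of the theorem to the equivalent characterization proved in Section~\ref{SectionExamples} and then to close the local-global gap with a standard global-arithmetic input. For part~(\ref{MainTheorem:LocalGlobal:Conic}), Proposition~\ref{DeterminantalRepresentationConic} says that what must be produced is a line bundle of degree $-1$ on $C$ over $K$ (equivalently, a non-effective theta characteristic, equivalently a $K$-rational point). For part~(\ref{MainTheorem:LocalGlobal:Cubic}), Proposition~\ref{DeterminantalRepresentationCubic} converts the problem into the assertion $\Jac(C)[2](K) \neq 0$. In both cases the question becomes a descent problem for a Galois-invariant datum on a simple algebro-geometric object, and Theorem~\ref{ExistenceCriterion} will convert the answer back into the existence of a symmetric determinantal representation.

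For the conic, I would argue as follows. Over $K^{\mathrm{sep}}$ the curve $C_{K^{\mathrm{sep}}}$ is isomorphic to $\PP^1_{K^{\mathrm{sep}}}$, so it carries a unique line bundle $\mathcal{L}$ of degree $-1$, whose class in $\Pic_{C/K}(K^{\mathrm{sep}})$ is automatically $\Gal(K^{\mathrm{sep}}/K)$-invariant because the degree is. For each place $v \neq v_0$ the local hypothesis and Proposition~\ref{DeterminantalRepresentationConic} give a $K_v$-rational point on $C$, so $\mathcal{L}$ already descends to a line bundle on $C_{K_v}$. Condition~(\ref{Proposition:RationalityLineBundle:LocalExistence}) of Proposition~\ref{Proposition:RationalityLineBundle} --- which repackages Hasse-Brauer-Noether as the injectivity of $\Br(K) \to \bigoplus_{v \neq v_0} \Br(K_v)$ --- then descends $\mathcal{L}$ to a line bundle on $C$. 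Theorem~\ref{ExistenceCriterion} completes the conic case.

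For the cubic, I would set $L := K(\Jac(C)[2])$, a finite separable Galois extension of $K$ since $\chara K \neq 2$, and use the Galois action to embed $G := \Gal(L/K)$ into $\Aut\bigl(\Jac(C)[2](K^{\mathrm{sep}})\bigr) \cong \GL_2(\F_2) \cong S_3$. The small but essential group-theoretic fact to record is that a subgroup $H$ of $\GL_2(\F_2)$ fixes a nonzero vector of $\F_2^2$ if and only if $H$ contains no element of order $3$: the unique subgroup of order $3$ permutes the three nonzero vectors cyclically, while any subgroup of order at most $2$ visibly fixes a nonzero vector. The local hypothesis, combined with Proposition~\ref{DeterminantalRepresentationCubic}, forces $\Frob_v \in G$ to fix a nonzero vector, hence to have order at most $2$, for all but finitely many $v$. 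The Chebotarev density theorem, applied to $L/K$ after discarding the finitely many ramified or exceptional places, guarantees that every conjugacy class of $G$ is represented by infinitely many Frobenii; if $G$ contained an element of order $3$ this would immediately contradict the local hypothesis. Hence $G$ has no element of order $3$, so it fixes a nonzero vector in $\F_2^2$, and Proposition~\ref{DeterminantalRepresentationCubic} concludes.

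The main obstacle lies in the cubic case, where one must convert the local presence of $K_v$-rational $2$-torsion into the absence of $3$-cycles in the global mod~$2$ image; the conic case is essentially a direct application of Hasse-Brauer-Noether via Proposition~\ref{Proposition:RationalityLineBundle}. The group-theoretic analysis of subgroups of $S_3$ acting on $\F_2^2$ is elementary but genuinely the pivot of the argument, and once it is combined with Chebotarev the conclusion is forced. Nothing in either argument is sensitive to whether $K$ is a number field or a global function field, since the hypothesis $\chara K \neq 2$ is exactly what makes the $2$-division tower separable and Chebotarev available in the required form.
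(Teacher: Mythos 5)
Your proposal is correct and follows essentially the same route as the paper: the conic case is the Hasse--Brauer--Noether argument for the class $\alpha_C$ (equivalently, descending the Galois-invariant degree $-1$ class via Proposition \ref{Proposition:RationalityLineBundle}), and the cubic case is exactly the mod $2$ Galois-representation/Chebotarev argument that the paper gives as a rephrasing of its one-line proof (a cubic with roots in almost all $K_v$ has a root in $K$), with your order-$3$ criterion being precisely Lemma \ref{LemmaGL(2,2)}. No gaps.
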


\begin{proof}
\noindent
(\ref{MainTheorem:LocalGlobal:Conic})
\ Let $\alpha_C \in \Br(K)$ be the element associated with $C$
in Subsection \ref{Subsection:Conics}.
Since the image of $\alpha_C$ in $\Br(K_v)$ is trivial
for each $v \neq v_0$,
we see that $\alpha_C$ is trivial by the structure of the Brauer group
of $K$ (\cite[\S 9, \S 10]{TateGCFT}, \cite[Theorem 8.1.17]{NeukirchSchmidtWingberg}).
Hence $C$ admits a symmetric determinantal representation over $K$
by Proposition \ref{DeterminantalRepresentationConic}.

\vspace{0.1in}

\noindent
(\ref{MainTheorem:LocalGlobal:Cubic})
\ The non-trivial 2-torsion points on $\Jac(C)$
are defined by a cubic polynomial $f(X) \in K[X]$.
The assertion follows from the well known fact that
$f(X) = 0$ has a solution in $K$
if it has a solution in $K_v$ for all but finitely many $v$
by Chebotarev's density theorem (\cite[I.2.2]{SerreAbelianl-adic}).
\end{proof}

\begin{rem}
Theorem \ref{MainTheorem:LocalGlobal}
(\ref{MainTheorem:LocalGlobal:Conic}) is optimal in the following sense:
for any global field $K$ and any finite set $S$ of places of $K$ of cardinality $\geq 2$,
there is a smooth plane conic $C \subset \PP^2_K$
such that $C$ admits a symmetric determinantal representation over $K_v$ for each $v \notin S$,
and $C$ does not admit a symmetric determinantal representation over $K$.
To see this,
let $\alpha \in \Br(K)$ be a non-trivial element  killed by $2$
satisfying $\inv_v(\alpha) = 0$ for all $v \notin S$
(\cite[\S 9, \S 10]{TateGCFT}, \cite[Theorem 8.1.17]{NeukirchSchmidtWingberg}),
and consider the conic (the Severi-Brauer variety) associated with it
(\cite[Ch X, \S 6]{SerreLocalFields}).
\end{rem}

\begin{rem}
The following argument seems instructive to understand how to construct
counterexamples to the local-global principle for quartics
in Section \ref{SectionMainTheorem2}
(see also Remark \ref{Lemma:GroupTheoreticLemma:Falsem=1}).
The proof of Theorem \ref{MainTheorem:LocalGlobal}
(\ref{MainTheorem:LocalGlobal:Cubic})
can be rephrased in terms of mod 2 Galois representations.
Choose an $\F_2$-basis on $\Jac(C)[2](K^{\mathrm{sep}})$,
and consider the mod $2$ Galois representation on it:
\[ \rho_{C,2} \colon \Gal(K^{\mathrm{sep}}/K) \ \longrightarrow \ \GL_2(\F_2). \]
Let $v$ be a finite place of $K$ such that
$C$ admits a symmetric determinantal representation over $K_v$
and $\Jac(C)$ has good reduction at $v$.
By Proposition \ref{DeterminantalRepresentationCubic},
the image of the geometric Frobenius element $\rho_{C,2}(\mathrm{Frob}_v)$
has a non-zero fixed vector.
Let
\[ G := \rho_{C,2} \big( \Gal(K^{\mathrm{sep}}/K) \big)
 \subset \GL_2(\F_2) \]
be the image of $\rho_{C,2}$,
which is generated by $\rho_{C,2}(\mathrm{Frob}_v)$ for all but finitely many $v$
by Chebotarev's density theorem (\cite[I.2.2]{SerreAbelianl-adic}).
By an easy group theoretic lemma (see Lemma \ref{LemmaGL(2,2)} below),
there is a non-zero vector fixed by all elements of $G$.
Hence $\Jac(C)$ has a non-trivial $K$-rational $2$-torsion point,
and $C$ admits a symmetric determinantal representation over $K$
by Proposition \ref{DeterminantalRepresentationCubic}.
\end{rem}

The proof of the following lemma is easy and omitted.

\begin{lem}
\label{LemmaGL(2,2)}
Let $G \subset \GL_2(\F_2)$ be a subgroup such that,
for each element $g \in G$,
the action of $g$ on $\F_2^{\oplus 2}$ has a non-zero fixed vector.
Then there is a non-zero vector $v \in \F_2^{\oplus 2}$ fixed by all elements of $G$.
\end{lem}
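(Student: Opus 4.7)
The plan is to exploit the exceptional isomorphism $\GL_2(\F_2) \cong S_3$, realized by the action on the three non-zero vectors of $\F_2^{\oplus 2}$, namely $e_1, e_2, e_1+e_2$. Under this identification, an element $g \in \GL_2(\F_2)$ admits a non-zero fixed vector if and only if the corresponding permutation in $S_3$ has a fixed point. The permutations without fixed points in $S_3$ are precisely the two $3$-cycles, so the hypothesis on $G$ is equivalent to the statement that $G$ contains no element of order $3$.

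From here I would finish in one of two equivalent ways. The shortest route is via Lagrange: since $|G|$ divides $|S_3| = 6$ and $G$ has no element of order $3$, the order $|G|$ must lie in $\{1,2\}$. If $|G| = 1$, any of the three non-zero vectors is a common fixed vector. If $|G| = 2$, write $G = \{1, g\}$; then $g$ corresponds to a transposition in $S_3$, which has a unique non-zero fixed vector in $\F_2^{\oplus 2}$, and this vector is fixed by all of $G$.

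Alternatively, one avoids Lagrange by a direct product computation: if $G$ contained two distinct transpositions (under the identification with $S_3$), their product would be a $3$-cycle, violating the hypothesis; hence $G$ is contained in a subgroup of order at most $2$, and the same conclusion follows. There is no genuine obstacle here, since the smallness of $\GL_2(\F_2)$ reduces the whole statement to an inspection; this is presumably why the authors simply mark the proof as easy and omitted.
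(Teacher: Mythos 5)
Your argument is correct and complete; the paper itself marks this proof as ``easy and omitted,'' so there is nothing to compare against, but your route through $\GL_2(\F_2)\cong S_3$ acting on the three non-zero vectors is exactly the expected one. One small point of precision: Lagrange alone only gives $|G|\in\{1,2,3,6\}$, and to exclude $3$ and $6$ you should add that a subgroup of order $3$ is cyclic generated by a $3$-cycle and a subgroup of order $6$ is all of $S_3$ (or invoke Cauchy); your alternative argument via the product of two distinct transpositions sidesteps this entirely and is airtight as written.
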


\section{Counterexamples to the local-global principle for quartics}
\label{SectionMainTheorem2}

We recall the basic results on quadratic forms over $\F_2$
in Subsection \ref{Subsection:QuadraticFormsF2}.
Then we recall the Mumford's results on a relation between
theta characteristics and quadratic forms over $\F_2$
in Subsection \ref{Subsection:ThetaCharacteristicsQuadraticFormsF2}.
In Subsection \ref{Subsection:GroupTheoreticLemma},
we prove a group theoretic lemma on the action of a subgroup of
$\Sp_{2m}(\F_2)$ on quadratic forms over $\F_2$ (Lemma \ref{Lemma:GroupTheoreticLemma}).
In Subsection \ref{Subsection:Criterion},
we give a sufficient condition for a smooth plane curve over a global field 
to violate the local-global principle
in terms of the associated mod 2 Galois representation
(Proposition \ref{Proposition:CounterexampleSufficientCondition}).
In Subsection \ref{Subsection:Counterexample},
we prove Theorem \ref{MainTheorem2} and
construct counterexamples to the local-global principle for quartics.

\subsection{Quadratic forms over $\F_2$}
\label{Subsection:QuadraticFormsF2}

We recall the basic results on quadratic forms over the finite field $\F_2$
of order two.
(For more details on the group theoretic properties of
the action of $\Sp_{2m}(\F_2)$ on quadratic forms,
see \cite{Dye}, \cite[\S 7.7]{DixonMortimer}, \cite{GrossHarris}.)

Let $\F_2^{\oplus 2m}$ be the $2m$-dimensional vector space over $\F_2$.
Let
\[ \{\ e_1,\ldots,e_m,\ f_1,\ldots,f_m \ \} \]
be the standard basis of $\F_2^{\oplus 2m}$,
and define the alternating bilinear form $\langle,\rangle$ on $\F_2^{\oplus 2m}$ by
\[ \langle e_i,e_j \rangle = 0,
\qquad \langle f_i,f_j \rangle = 0,
\qquad \langle e_i,f_j \rangle = \langle f_j,e_i \rangle
= \begin{cases} 1 & i=j, \\ 0 & i \neq j. \end{cases}.
\]

The symplectic group $\Sp_{2m}(\F_2)$ is defined to be
the group of $\F_2$-linear automorphisms of $\F_2^{\oplus 2m}$
preserving the alternating form $\langle,\rangle$.
A {\em quadratic form} on $\F_2^{\oplus 2m}$ with polar form $\langle,\rangle$
is a map
$Q \colon \F_2^{\oplus 2m} \longrightarrow \F_2$
satisfying
\[ Q(x+y) - Q(x) -Q(y) = \langle x,y \rangle \]
for all $x,y \in \F_2^{\oplus 2m}$.

There are $2^{2m}$ quadratic forms on $\F_2^{\oplus 2m}$
with polar form $\langle,\rangle$.
The symplectic group $\Sp_{2m}(\F_2)$ acts on the set of quadratic forms
with polar form $\langle,\rangle$ by
\[ (g \cdot Q)(x) := Q(g^{-1} x) \]
for $g \in \Sp_{2m}(\F_2),\ x \in \F_2^{\oplus 2m}$.
This action has two orbits $\Omega^{+}, \Omega^{-}$ of
size $2^{m-1}(2^{m} + 1), 2^{m-1}(2^{m} - 1)$, respectively.
These orbits are distinguished by the {\em Arf invariant}.
There are several equivalent definitions of the Arf invariant.
One definition of the Arf invariant of a quadratic form $Q$ is
\[ \sum_{i=1}^{m} Q(e_i) Q(f_i) \in \F_2, \]
which is shown to be independent of the choice of the symplectic basis.
Another impressive definition is this:
the Arf invariant of $Q$ is $a$ ($a \in \{ 0, 1 \}$) if and only if
the number of elements $x \in \F_2^{\oplus 2m}$ with $Q(x) = a$
is equal to $2^{m-1}(2^{m} + 1)$ (\cite[Corollary 1.12]{GrossHarris}).

\subsection{Theta characteristics and quadratic forms over $\F_2$}
\label{Subsection:ThetaCharacteristicsQuadraticFormsF2}

We recall a relation between theta characteristics and quadratic forms over $\F_2$
due to Mumford (\cite{MumfordTheta}).

Let $C$ be a proper smooth geometrically connected curve of genus $g$ over a field $K$.
Assume that the characteristic of $K$ is {\em different from two}.
The Jacobian variety $\Jac(C)$, which is
the identity component of the Picard scheme $\Pic_{C/K}$,
is an abelian variety of dimension $g$
(cf.\ Section \ref{SectionPicardFunctor}).
The multiplication-by-2 isogeny
\[ [2] \colon \Pic_{C/K} \longrightarrow \Pic_{C/K} \]
is \'etale because $2$ is invertible in $K$.
The group scheme $\Jac(C)[2]$ is finite and \'etale over $K$ of order $2^{2g}$.
All $\overline{K}$-rational points on $\Jac(C)[2]$
are defined over $K^{\mathrm{sep}}$.
We see that
\[ \Jac(C)[2](K^{\mathrm{sep}}) = \Jac(C)[2](\overline{K}) \]
is an $\F_2$-vector space of dimension $2g$.
We have the {\em Weil pairing}
\[
e_2 \colon \mathrm{Jac}(C)[2](K^{\mathrm{sep}}) \times \mathrm{Jac}(C)[2](K^{\mathrm{sep}}) \longrightarrow \{ \pm 1 \} \cong \F_2,
\]
which is an alternating bilinear form over $\F_2$.
Here the multiplicative group $\{ \pm 1 \}$
is isomorphic to the additive group of $\F_2$.
Mumford proved that, for a theta characteristic $\mathcal{L}$ on $C_{K^{\mathrm{sep}}}$,
the map
\[ Q_{\mathcal{L}} \colon \mathrm{Jac}(C)[2](K^{\mathrm{sep}}) \longrightarrow \F_2 \]
defined by
\[ [\mathcal{M}] \mapsto
  \big( \dim_{K^{\mathrm{sep}}} H^0(C_{K^{\mathrm{sep}}},\mathcal{L} \otimes \mathcal{M})
    + \dim_{K^{\mathrm{sep}}} H^0(C_{K^{\mathrm{sep}}},\mathcal{L}) \big) \pmod{2}
\]
is a quadratic form with polar form $e_2$.
Any quadratic form on $\mathrm{Jac}(C)[2](K^{\mathrm{sep}})$ with polar form $e_2$
can be written as $Q_{\mathcal{L}}$
for a theta characteristic $\mathcal{L}$ on $C_{K^{\mathrm{sep}}}$, and
the isomorphism class of $\mathcal{L}$ is uniquely determined by its associated quadratic form.
The Arf invariant of $Q_{\mathcal{L}}$ is 0 (resp.\ 1)
if and only if $\mathcal{L}$ is even (resp.\ odd) (see Definition \ref{DefinitionTheta}).

\begin{rem}
\label{Remark:MumfordSeparableClosure}
A cautious reader might note that
Mumford worked over {\em an algebraic closure $\overline{K}$} of $K$ rather than
a separable closure $K^{\mathrm{sep}}$ of $K$ (\cite{MumfordTheta}).
It is easy to see that his results are valid over $K^{\mathrm{sep}}$ as well.
To see this, it is enough to note the following:
\begin{enumerate}
\item $\Jac(C)[2](K^{\mathrm{sep}}) = \Jac(C)[2](\overline{K})$ because $2$ is invertible in $K$.
\item For a theta characteristic $\mathcal{L}$ on $C_{K^{\mathrm{sep}}}$,
we have $Q_{\mathcal{L}} = Q_{\mathcal{L}_{\overline{K}}}$.

\item Every $\overline{K}$-rational (resp.\ $K^{\mathrm{sep}}$-rational)
point on the Picard scheme $\Pic_{C/K}$ comes from a line bundle on
$C_{\overline{K}}$ (resp.\ $C_{K^{\mathrm{sep}}}$)
because the Brauer group $\Br(\overline{K})$ (resp.\ $\Br(K^{\mathrm{sep}})$) is trivial
(\cite[\S 8.1, Proposition 4]{BoschLuetkebohmertRaynaud}).

\item The set of isomorphism classes of theta characteristics on $C_{\overline{K}}$
(resp.\ $C_{K^{\mathrm{sep}}}$) is identified with
$\big( [2]^{-1}([\Omega^1_C]) \big)(\overline{K})$
(resp.\ $\big( [2]^{-1}([\Omega^1_C]) \big)(K^{\mathrm{sep}})$).
Since $[2]$ is an \'etale isogeny, these two sets are equal.
Hence the set of isomorphism classes of theta characteristics on $C_{\overline{K}}$
and on $C_{K^{\mathrm{sep}}}$ are canonically identified.
\end{enumerate}
\end{rem}

From the above remarks, we have the following proposition.

\begin{prop}
\label{Proposition:MumfordBijection}
For each $a \in \{ 0,1 \}$, the association
$\mathcal{L} \mapsto Q_{\mathcal{L}}$
gives a bijection between the following sets:
\begin{itemize}
\item The set of quadratic forms on $\mathrm{Jac}(C)[2](K^{\mathrm{sep}})$
of Arf invariant $a$ whose polar form is the Weil pairing $e_2$.
\item The set of isomorphism classes of 
theta characteristics $\mathcal{L}$ on $C_{K^{\mathrm{sep}}}$ satisfying
$\dim_{K^{\mathrm{sep}}} H^0(C_{K^{\mathrm{sep}}},\mathcal{L}) \equiv a \pmod{2}$.
\end{itemize}
The bijection is equivariant with respect to the action
of $\Gal(K^{\mathrm{sep}}/K)$.
\end{prop}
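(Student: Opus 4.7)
The plan is to deduce the proposition essentially formally from Mumford's theorem as recalled in Subsection~\ref{Subsection:ThetaCharacteristicsQuadraticFormsF2}, together with the $\overline{K}$ versus $K^{\sep}$ translation of Remark~\ref{Remark:MumfordSeparableClosure}, and a direct check of Galois equivariance. Concretely, the results attributed to Mumford already give that $\mathcal{L} \mapsto Q_{\mathcal{L}}$ is a well-defined surjection from isomorphism classes of theta characteristics on $C_{K^{\sep}}$ onto the set of quadratic forms on $\Jac(C)[2](K^{\sep})$ with polar form $e_2$, that it is injective on isomorphism classes, and that the Arf invariant of $Q_{\mathcal{L}}$ equals the parity of $\dim_{K^{\sep}} H^0(C_{K^{\sep}},\mathcal{L})$. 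Restricting this bijection according to Arf invariant $a$ on one side and parity $a$ on the other immediately yields the claimed bijection for each $a \in \{0,1\}$.

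For the Galois equivariance, fix $\sigma \in \Gal(K^{\sep}/K)$ and denote by $\sigma^{*}\mathcal{F}$ the pullback of a coherent sheaf $\mathcal{F}$ on $C_{K^{\sep}} = C\otimes_{K} K^{\sep}$ along the $K$-automorphism of $C_{K^{\sep}}$ induced by $\sigma$. The key observation is that this pullback is $\sigma$-semilinear on global sections, so it preserves $K^{\sep}$-dimensions:
\[
\dim_{K^{\sep}} H^{0}(C_{K^{\sep}},\,\sigma^{*}\mathcal{F}) \ = \ \dim_{K^{\sep}} H^{0}(C_{K^{\sep}},\,\mathcal{F}).
\]
Applying this with $\mathcal{F} = \mathcal{L}\otimes\mathcal{M}$ and with $\mathcal{F} = \mathcal{L}$, and using the compatibility $\sigma^{*}(\mathcal{L}\otimes\mathcal{M})=\sigma^{*}\mathcal{L}\otimes\sigma^{*}\mathcal{M}$, one obtains
\[
Q_{\sigma^{*}\mathcal{L}}\big([\sigma^{*}\mathcal{M}]\big) \ = \ Q_{\mathcal{L}}\big([\mathcal{M}]\big).
\]
Writing $[\sigma^{*}\mathcal{M}] = \sigma\cdot[\mathcal{M}]$ for the induced Galois action on $\Jac(C)[2](K^{\sep})$ and using the convention $(g\cdot Q)(x) := Q(g^{-1}x)$ from Subsection~\ref{Subsection:QuadraticFormsF2}, this rewrites as $Q_{\sigma^{*}\mathcal{L}} = \sigma\cdot Q_{\mathcal{L}}$, which is the required equivariance.

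There is no real obstacle: the substantive content is Mumford's theorem, invoked as a black box, and the only things to verify directly are the re-organisation of the bijection by parity/Arf invariant and the invariance of $\dim_{K^{\sep}} H^{0}$ under Galois pullback, both of which are routine. The subtlest point is purely notational, namely making sure that the action conventions on theta characteristics, on $2$-torsion, and on quadratic forms are chosen consistently so that ``equivariant'' refers to the same $\Gal(K^{\sep}/K)$-action on both sides of the bijection.
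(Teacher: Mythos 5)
Your proposal is correct and follows essentially the same route as the paper, which gives no separate proof but derives the proposition directly from Mumford's results as recalled in Subsection~\ref{Subsection:ThetaCharacteristicsQuadraticFormsF2} together with Remark~\ref{Remark:MumfordSeparableClosure}. Your explicit verification of the Galois equivariance via $\sigma$-semilinear pullback on global sections is a sound elaboration of a step the paper leaves implicit, and you correctly flag the only delicate point, namely keeping the action conventions on line bundles, $2$-torsion, and quadratic forms consistent.
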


From Proposition \ref{Proposition:RationalityLineBundle},
we have the following corollary.

\begin{cor}
\label{Corollary:MumfordBijectionGaloisEquivariance}
Let $Q$ be a quadratic form on $\Jac(C)[2](K^{\mathrm{sep}})$
whose polar form is the Weil pairing $e_2$.
If $Q$ is fixed by the action of $\Gal(K^{\mathrm{sep}}/K)$,
there is a theta characteristic $\mathcal{L}$ on $C_{K^{\mathrm{sep}}}$
with $Q = Q_{\mathcal{L}}$
by Proposition \ref{Proposition:MumfordBijection}.
Then the line bundle $\mathcal{L}$ on $C_{K^{\mathrm{sep}}}$
comes from a theta characteristic on $C$
if {\em at least one} of the following conditions is satisfied:
\begin{enumerate}
\renewcommand{\labelenumi}{(\alph{enumi})}
\renewcommand{\theenumi}{{\rm\alph{enumi}}}
\item \label{Corollary:MumfordBijectionGaloisEquivariance:BrauerTrivial}
  $\Br(K)$ is trivial,

\item \label{Corollary:MumfordBijectionGaloisEquivariance:ExistenceRationalPoint}
  $C$ has a $K$-rational point,

\item \label{Corollary:MumfordBijectionGaloisEquivariance:ExistenceRationalPointOddDegree}
  there is a finite extension $M/K$ of {\em odd} degree such that $C$ has an $M$-rational point, or

\item \label{Corollary:MumfordBijectionGaloisEquivariance:LocalExistence}
  $K$ is a global field, and there is a place $v_0$ of $K$ such that
  $\mathcal{L}_{K_v^{\mathrm{sep}}}$ comes from a theta characteristic on $C_{K_v}$
  for any place $v \neq v_0$ of $K$.
\end{enumerate}
\end{cor}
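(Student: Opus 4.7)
The plan is to combine the Galois-equivariance in Proposition \ref{Proposition:MumfordBijection} with the descent statement of Proposition \ref{Proposition:RationalityLineBundle}, and then invoke Proposition \ref{Proposition:ThetaDescent} to see that the descended line bundle remains a theta characteristic. The proof is essentially a bookkeeping exercise linking results that have already been established.

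First, since the assignment $\mathcal{L} \mapsto Q_{\mathcal{L}}$ in Proposition \ref{Proposition:MumfordBijection} is $\Gal(K^{\mathrm{sep}}/K)$-equivariant, the hypothesis that $Q = Q_{\mathcal{L}}$ is Galois-fixed translates directly into the statement that the isomorphism class $[\mathcal{L}] \in \Pic(C_{K^{\mathrm{sep}}})$ is fixed by the action of $\Gal(K^{\mathrm{sep}}/K)$. This is precisely the input needed to feed $\mathcal{L}$ into Proposition \ref{Proposition:RationalityLineBundle} with the (possibly infinite) Galois extension $L = K^{\mathrm{sep}}/K$ and $X = C$, which is a proper smooth geometrically connected curve and therefore has connected reduced geometric fiber.

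Next, I would verify that each of the four conditions (a)--(d) in the corollary matches the correspondingly labelled condition in Proposition \ref{Proposition:RationalityLineBundle}. Conditions (a) and (b) transfer verbatim. For (c), I take the integer $r = 2$: the line bundle $\mathcal{L}^{\otimes 2}$ is isomorphic to the pullback of $\Omega^1_C$, which is already a line bundle on $C$, and the hypothesis that $[M:K]$ is odd ensures that $r = 2$ is coprime to $[M:K]$. For (d), any theta characteristic on $C_{K_v}$ is in particular a line bundle on $C_{K_v}$, so the hypothesis of Proposition \ref{Proposition:RationalityLineBundle}(d) follows immediately from the local existence of a theta characteristic. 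In any of these cases, Proposition \ref{Proposition:RationalityLineBundle} yields a line bundle $\mathcal{L}'$ on $C$ whose pullback to $C_{K^{\mathrm{sep}}}$ is isomorphic to $\mathcal{L}$.

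Finally, Proposition \ref{Proposition:ThetaDescent} tells us that $\mathcal{L}'$ is a theta characteristic on $C$ if and only if $\mathcal{L} = \mathcal{L}'|_{C_{K^{\mathrm{sep}}}}$ is a theta characteristic on $C_{K^{\mathrm{sep}}}$, and the latter holds by the very choice of $\mathcal{L}$. The only point requiring any thought is matching the local conditions in (d) with those of Proposition \ref{Proposition:RationalityLineBundle}(d)---in particular, interpreting $K_v^{\mathrm{sep}}$ as a completion of $K^{\mathrm{sep}}$ at a place above $v$---but since the hypothesis is stronger than what is needed (a theta characteristic rather than an arbitrary line bundle), no serious obstacle arises.
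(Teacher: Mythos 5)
Your proposal is correct and follows exactly the route the paper intends: the paper gives no separate argument for this corollary beyond the phrase ``From Proposition \ref{Proposition:RationalityLineBundle}, we have the following corollary,'' and your matching of conditions (a)--(d) (in particular taking $r=2$ in condition (\ref{Proposition:RationalityLineBundle:ExistenceRationalPointAfterExtension}) using $\mathcal{L}^{\otimes 2} \cong (\Omega^1_C)_{K^{\mathrm{sep}}}$, and weakening ``theta characteristic'' to ``line bundle'' in (d)) together with the descent of the theta-characteristic property via Proposition \ref{Proposition:ThetaDescent} is precisely the intended bookkeeping.
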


\subsection{Group theoretic lemmas}
\label{Subsection:GroupTheoreticLemma}

We prove a group theoretic lemma
on the action of subgroups of $\Sp_{2m}(\F_2)$ on quadratic forms over $\F_2$
(Lemma \ref{Lemma:GroupTheoreticLemma}).
The case of $m=3$ of Lemma \ref{Lemma:GroupTheoreticLemma}
will be used to construct counterexamples to the local-global principle
for quartics (cf.\ Subsection \ref{Subsection:Counterexample}).

In the following,
we use the same notation as in Subsection \ref{Subsection:QuadraticFormsF2}.

\begin{lem}
\label{Sublemma:GroupTheoreticLemma}
Fix an integer $m \geq 1$,
and a quadratic form $Q$ on $\F_2^{\oplus 2m}$ of Arf invariant 1
whose polar form is the standard alternating bilinear form $\langle,\rangle$.
Let
$\mathrm{O}(Q) \subset \Sp_{2m}(\F_2)$
be the orthogonal group associated with $Q$,
which is the group of $\F_2$-linear automorphisms of $\F_2^{\oplus 2m}$ preserving $Q$.
We denote the identity element by $e \in \mathrm{O}(Q)$.
Then there are elements
$\sigma, \tau \in \mathrm{O}(Q)$
satisfying all of the following conditions:
\begin{enumerate}
\renewcommand{\labelenumi}{(\alph{enumi})}
\renewcommand{\theenumi}{{\rm\alph{enumi}}}
\item \label{Sublemma:GroupTheoreticLemma:Condition1}
$\sigma \neq e$,

\item \label{Sublemma:GroupTheoreticLemma:Condition2}
$\tau^2 = e$,

\item \label{Sublemma:GroupTheoreticLemma:Condition3}
$\tau \sigma \tau = \sigma^{-1}$,

\item \label{Sublemma:GroupTheoreticLemma:Condition4}
$\sigma^i$ has no non-zero fixed vector in $\F_2^{\oplus 2m}$
for any $i$ with $\sigma^i \neq e$, and

\item \label{Sublemma:GroupTheoreticLemma:Condition5}
$\tau \sigma^i$ has a non-zero fixed vector
$x \in \F_2^{\oplus 2m}$ with $Q(x) = 1$ for any $i$.
\end{enumerate}
Note that,  in the condition (\ref{Sublemma:GroupTheoreticLemma:Condition5}),
the fixed vector $x \in \F_2^{\oplus 2m}$ of $\tau \sigma^i$ may depend on $i$.
\end{lem}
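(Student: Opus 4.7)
The plan is to construct $\sigma$ and $\tau$ explicitly inside the orthogonal group of a conveniently chosen ``arithmetic'' model of an Arf-invariant-$1$ quadratic form built from the field extension $\F_{2^{2m}}/\F_2$. Since $\Sp_{2m}(\F_2)$ acts transitively on quadratic forms of Arf invariant $1$ with polar form $\langle,\rangle$, the stabilizers $\mathrm{O}(Q)$ are all mutually conjugate, so it suffices to treat one particular such form, and then transport the resulting $\sigma,\tau$ to $\mathrm{O}(Q)$ by conjugation inside $\Sp_{2m}(\F_2)$.

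Identify $V:=\F_2^{\oplus 2m}$ with the field $\F_{2^{2m}}$ as $\F_2$-vector spaces, write $\bar{x}:=x^{2^m}$, and fix a generator $\zeta$ of the kernel of the norm $N_{\F_{2^{2m}}/\F_{2^m}}\colon \F_{2^{2m}}^\times\to\F_{2^m}^\times$, which is cyclic of order $2^m+1$. Let $\sigma$ be multiplication by $\zeta$, let $\tau$ be the Frobenius $x\mapsto\bar{x}$, and for a fixed $c\in\F_{2^{2m}}\setminus\F_{2^m}$ define
\[
   Q_c(x) \;:=\; \Tr_{\F_{2^{2m}}/\F_2}(c\, x \bar{x}).
\]
A routine computation shows that $Q_c$ is a quadratic form with polar form $B_c(x,y)=\Tr\bigl(c(x\bar{y}+\bar{x}y)\bigr)$, that $B_c$ is non-degenerate (since $c+\bar{c}\neq 0$), and that $\sigma,\tau\in\mathrm{O}(Q_c)$ (using $\zeta\bar{\zeta}=N(\zeta)=1$ and $\overline{\bar{x}}=x$). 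Counting the level sets of $Q_c$ by means of the norm map gives that the number of $x\in\F_{2^{2m}}$ with $Q_c(x)=1$ equals $2^{m-1}(2^m+1)$, so $Q_c$ has Arf invariant $1$. Transporting $(V,B_c)$ to the standard symplectic space $(\F_2^{\oplus 2m},\langle,\rangle)$ and then conjugating inside $\Sp_{2m}(\F_2)$ so that the image of $Q_c$ becomes $Q$, we may assume $\sigma,\tau\in\mathrm{O}(Q)$.

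Conditions (a)--(c) are immediate: (a) $\sigma\neq e$ because $\zeta\neq 1$; (b) $\tau^2(x)=x^{2^{2m}}=x$; (c) $\tau\sigma\tau(x)=\overline{\zeta\bar{x}}=\bar{\zeta}\,x=\zeta^{-1}x$ since $\bar{\zeta}=\zeta^{-1}$. For (d), if $\zeta^i\neq 1$ then $\zeta^i-1$ is a unit of the field $\F_{2^{2m}}$, so $\sigma^i x=x$ forces $x=0$. The main step is (e). The equation $\tau\sigma^i(x)=\zeta^{-i}\bar{x}=x$ rewrites as $x^{2^m-1}=\zeta^i$, which is solvable because the image of the $(2^m-1)$-th power map on $\F_{2^{2m}}^\times$ is precisely the norm-one subgroup, which contains $\zeta^i$. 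Picking a solution $x_0$, the non-zero fixed vectors of $\tau\sigma^i$ form the coset $x_0\F_{2^m}^\times$. Using $\bar{y}=y$ for $y\in\F_{2^m}$ together with the tower of traces,
\[
   Q_c(x_0 y) \;=\; \Tr_{\F_{2^{2m}}/\F_2}\!\bigl(c\, N(x_0)\, y^2\bigr) \;=\; \Tr_{\F_{2^m}/\F_2}\!\bigl((c+\bar{c})\,N(x_0)\,y^2\bigr),
\]
which is a non-trivial $\F_2$-linear form in $y^2\in\F_{2^m}$ because $(c+\bar{c})N(x_0)\in\F_{2^m}^\times$. Since squaring is a bijection on $\F_{2^m}$, this form attains both values $0$ and $1$, so some $y\in\F_{2^m}^\times$ furnishes a non-zero fixed vector $x=x_0 y$ with $Q_c(x)=1$. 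The principal obstacle is the coincidence at the heart of (e): the very condition $c\notin\F_{2^m}$ that gives $Q_c$ Arf invariant $1$ is what makes the restriction of $Q_c$ to each line $x_0\F_{2^m}$ of fixed vectors non-trivial, allowing a fixed vector with $Q_c=1$ to be found there.
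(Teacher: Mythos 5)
Your proof is correct and follows essentially the same route as the paper: both identify $\F_2^{\oplus 2m}$ with $\F_{2^{2m}}$, take $\sigma$ to be multiplication by a generator of the norm-one subgroup and $\tau$ to be the Frobenius $x \mapsto x^{2^m}$, and establish condition (e) by solving $x^{2^m-1}=\zeta^i$ (Hilbert 90) and then rescaling the solution by elements of $\F_{2^m}^{\times}$, using surjectivity of squaring and non-degeneracy of the trace form. The only (harmless) cosmetic difference is your twist by $c$: the paper simply uses $Q(x)=\Tr_{\F_{2^m}/\F_2}\bigl(\mathrm{N}_{\F_{2^{2m}}/\F_{2^m}}(x)\bigr)$, i.e.\ the case $c+\bar{c}=1$.
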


\begin{rem}
\label{Sublemma:Remark:GroupTheoreticLemma}
The subgroup of $\mathrm{O}(Q)$ generated by $\sigma, \tau$
is isomorphic to the dihedral group of order $2 n(\sigma)$,
where $n(\sigma)$ is the order of $\sigma$.
\end{rem}

\begin{proof}
Since $\Sp_{2m}(\F_2)$ acts transitively on the set of quadratic forms with Arf invariant 1,
it is enough to prove the assertion for a particular quadratic form with Arf invariant 1.

Let $\F_{2^{2m}}$ be the finite field of order $2^{2m}$.
Let us consider $\F_{2^{2m}}$ as an $\F_2$-vector space of dimension $2m$.
We shall construct an alternating bilinear form $\langle,\rangle$ and
a quadratic form $Q$ with Arf invariant 1 as follows.
In order to shorten the notation, we put $F(k) := \F_{2^{k}}$.
For $x \in F(2m)$, the conjugate of $x$ over $F(m)$ is denoted by $\overline{x}$.
For $x,y \in F(2m)$, we define $\langle x,y \rangle$ and $Q(x)$ by
\begin{align*}
  \langle x,y \rangle &:= \Tr_{F(2m)/F(1)} (x \overline{y}), \\
  \quad Q(x) &:= \Tr_{F(m)/F(1)} \big( \,\mathrm{N}_{F(2m)/F(m)}(x) \big).
\end{align*}
It is a routine exercise to check that $\langle,\rangle$ is an alternating bilinear form on $F(2m)$,
and $Q$ is a quadratic form on $F(2m)$ with polar form $\langle,\rangle$.

We shall show that the Arf invariant of $Q$ is 1.
We count the number of elements $x \in F(2m)$ with $Q(x) = 1$ as follows.
Since $F(m)/F(1)$ is a separable extension,
the trace map
\[ \Tr_{F(m)/F(1)} \colon F(m) \longrightarrow F(1) \]
is surjective.
The number of elements $t \in F(m)$ with $\Tr_{F(m)/F(1)}(t) = 1$ is $2^{m-1}$.
The norm map
\[ \mathrm{N}_{F(2m)/F(m)} \colon F(2m)^{\times} \longrightarrow F(m)^{\times} \]
is surjective (\cite[Ch.\ X, \S 7]{SerreLocalFields}).
Hence the number of elements $x \in F(2m)$ with $Q(x) = 1$
is equal to
\[ 2^{m-1} \cdot [F(2m)^{\times} : F(m)^{\times}] = 2^{m-1}(2^m + 1), \]
and the Arf invariant of $Q$ is 1.

Let $\mathrm{O}(Q)$ be the group of $\F_2$-linear automorphisms of $F(2m)$
preserving $Q$.
We shall construct two elements in $\mathrm{O}(Q)$
satisfying all of the conditions of this lemma.
Since $F(2m)^{\times}$ is a cyclic group,
the kernel of the norm map
\[ \mathrm{N}_{F(2m)/F(m)} \colon F(2m)^{\times} \longrightarrow F(m)^{\times} \]
is also cyclic. We take a generator $s \in F(2m)^{\times}$
of the kernel of $\mathrm{N}_{F(2m)/F(m)}$.
We define $g \in \mathrm{O}(Q)$ by $g(x) := s x$.
We define $h \in \mathrm{O}(Q)$ by $h(x) := \overline{x}$.

We shall prove that the elements $g,h \in \mathrm{O}(Q)$ satisfy
the required conditions for $\sigma, \tau$.
The conditions
(\ref{Sublemma:GroupTheoreticLemma:Condition1}),
(\ref{Sublemma:GroupTheoreticLemma:Condition2})
are obvious because $s \neq 1$ and $\overline{\overline{x}} = x$ for $x \in F(2m)$.
The condition
(\ref{Sublemma:GroupTheoreticLemma:Condition3})
is satisfied because we have
\[ (h \circ g \circ h)(x) = \overline{s \overline{x}}
    = s^{-1} x
    = g^{-1}(x). \]
If $g^i$ is not the identity element, we see that
$s^i \neq 1$ and the map
\[ x \mapsto g^i(x) = s^i x \]
has no non-zero fixed vector.
Hence the condition
(\ref{Sublemma:GroupTheoreticLemma:Condition4})
is satisfied.
Since $s^{-1} = \overline{s}$,
we have
\[ (h \circ g^i)(x) = x \iff s^{-i} \overline{x} = x \]
for $x \in F(2m)$.
Since $F(2m)/F(m)$ is a quadratic Galois extension and
\[ \mathrm{N}_{F(2m)/F(m)}(s^{-i}) = 1, \]
there is an element $y \in F(2m)^{\times}$ satisfying
$s^{-i} \overline{y} = y$
by Hilbert's Theorem 90 (\cite[Ch.\ X, \S 1]{SerreLocalFields}).
The condition ``$s^{-i} \overline{y} = y$'' is satisfied if we replace $y$ by $ty$
for $t \in F(m)^{\times}$.
The map
\[ F(m)^{\times} \longrightarrow F(m)^{\times},\ t \mapsto t^2 \]
is surjective because $F(m)^{\times}$ is a finite abelian group of odd order.
For an element $t \in F(m)^{\times}$,
we have
\begin{align*}
  Q(ty) &:= \Tr_{F(m)/F(1)} \big( \,\mathrm{N}_{F(2m)/F(m)}(ty) \big) \\
        &:= \Tr_{F(m)/F(1)} \big( \, t^2 \mathrm{N}_{F(2m)/F(m)}(y) \big).
\end{align*}
The bilinear form
\[
F(m) \times F(m) \longrightarrow F(1), \qquad
(u,v) \mapsto \Tr_{F(m)/F(1)} \big( uv  \big)
\]
is non-degenerate because $F(m)/F(1)$ is a separable extension.
Therefore,
after replacing $y$ by $ty$ for some $t \in F(m)^{\times}$,
we have $Q(y) = 1$.
The condition (\ref{Sublemma:GroupTheoreticLemma:Condition5}) is satisfied.
\end{proof}

\begin{lem}
\label{Lemma:GroupTheoreticLemma}
For $m \geq 3$, there is a subgroup
$G \subset \Sp_{2m}(\F_2)$
satisfying both of the following conditions:
\begin{enumerate}
\renewcommand{\labelenumi}{(\alph{enumi})}
\renewcommand{\theenumi}{{\rm\alph{enumi}}}
\item \label{Lemma:GroupTheoreticLemma:1}
there does not exist a quadratic form of Arf invariant $0$
with polar form $\langle,\rangle$ fixed by all elements of $G$, and

\item \label{Lemma:GroupTheoreticLemma:2}
for each $g \in G$, there is a quadratic form
of Arf invariant $0$ with polar form $\langle,\rangle$ fixed by $g$.
\end{enumerate}
\end{lem}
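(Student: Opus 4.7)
My plan is to invoke Lemma \ref{Sublemma:GroupTheoreticLemma} to produce elements $\sigma, \tau \in \mathrm{O}(Q) \subset \Sp_{2m}(\F_2)$ satisfying its conditions (a)--(e), where $Q$ is a fixed quadratic form of Arf invariant $1$, and take $G := \langle \sigma, \tau \rangle$. The verification rests on the standard parametrization of the $2^{2m}$ quadratic forms with polar form $\langle,\rangle$ as $\{Q + \ell_v : v \in \F_2^{\oplus 2m}\}$ with $\ell_v(x) := \langle x, v\rangle$. A direct computation using $Q(x+y) = Q(x) + Q(y) + \langle x, y\rangle$ yields the two key identities $\mathrm{Arf}(Q + \ell_v) = 1 + Q(v)$ and $g \cdot (Q + \ell_v) = Q + \ell_{gv}$ for $g \in \mathrm{O}(Q)$, so Arf-$0$ forms correspond to $v$ with $Q(v) = 1$ and $g$ fixes $Q + \ell_v$ precisely when $gv = v$. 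Under these identifications, condition (a) translates into: no non-zero $v$ with $Q(v) = 1$ is simultaneously fixed by every element of $G$; condition (b) translates into: for every $g \in G$, there is a non-zero fixed vector $v_g$ of $g$ satisfying $Q(v_g) = 1$.

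With this setup, (a) follows from property (d) of the sublemma: any common $G$-fixed $v$ would in particular be fixed by $\sigma$, but (d) forces $\mathrm{Fix}(\sigma) = \{0\}$, and $Q(0) = 0 \neq 1$. The ``reflection'' half of (b) is handled by property (e): for each $g = \tau\sigma^i$, the sublemma directly provides a non-zero fixed vector $x$ with $Q(x) = 1$, so $Q + \ell_x$ is an Arf-$0$ form fixed by $g$.

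The main obstacle I expect is the analysis of the ``rotation'' elements $g = \sigma^i$ with $\sigma^i \neq e$ for condition (b). By (d), these have no
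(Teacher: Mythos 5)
Your setup is correct as far as it goes: the parametrization of quadratic forms as $Q_v = Q + \langle\,\cdot\,,v\rangle$, the identity $\mathrm{Arf}(Q_v) = 1 + Q(v)$, and the equivalence $g\cdot Q_v = Q_v \iff gv = v$ for $g \in \mathrm{O}(Q)$ all match the paper's bookkeeping, and your use of (d) for condition (a) and of (e) for the elements $\tau\sigma^i$ is fine. But the obstacle you flag at the end is not a loose end to be tidied up --- it is fatal to the construction as you have set it up. If you apply Lemma \ref{Sublemma:GroupTheoreticLemma} to the \emph{full} space $\F_2^{\oplus 2m}$ and take $G = \langle \sigma,\tau\rangle$, then for $\sigma^i \neq e$ condition (d) says $\mathrm{Fix}(\sigma^i) = \{0\}$, so the \emph{only} quadratic form fixed by $\sigma^i$ is $Q_0 = Q$ itself, which has Arf invariant $1$. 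Hence $\sigma^i$ fixes no Arf-$0$ form at all, and condition (b) of the lemma fails for these elements. The very property (d) that gives you (a) destroys (b).

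This is precisely why the paper does not apply the sublemma to the whole space. It decomposes $\F_2^{\oplus 2m} \cong V_1 \oplus V_2 \oplus V_3$ with $\dim V_1 = 2m-4$ and $\dim V_2 = \dim V_3 = 2$, applies the sublemma only to a quadratic form $Q_1$ of Arf invariant $1$ on $V_1$, and takes $G = \langle (\sigma,\id),\ (\tau,\eta)\rangle$ where $\eta$ swaps $V_2$ and $V_3$. The extra summand $V_2 \oplus V_3$ is exactly what rescues condition (b) for the rotations: $(\sigma^i,\id)$ acts trivially there and so fixes $e_2$, which satisfies $Q_2(e_2)=1$. Meanwhile condition (a) survives because any vector fixed by all of $G$ has zero $V_1$-component (by (d)) and lies in the $\eta$-fixed subspace spanned by $e_2+e_3$ and $f_2+f_3$, on which $Q_2\oplus Q_3$ vanishes identically. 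To complete your argument you would need to import this (or an equivalent) modification; the direct approach cannot be repaired. Note this also explains the hypothesis $m \geq 3$: the sublemma needs $V_1$ nontrivial, i.e. $2m - 4 \geq 2$.
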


\begin{proof}
Let us decompose $\F_2^{\oplus 2m}$ as
\[ \F_2^{\oplus 2m} 
    \cong \F_2^{\oplus 2m-4} \oplus \F_2^{\oplus 2} \oplus \F_2^{\oplus 2}
    \cong V_1 \oplus V_2 \oplus V_3. \]
We put the standard alternating bilinear form on each $V_i$.

We shall define a quadratic form on each $V_i$ as follows.
Let $Q_1$ be a quadratic form on $V_1$ of Arf invariant 1.
Let $\{ e_2,f_2 \}$ (resp.\ $\{ e_3,f_3 \}$) be the standard symplectic basis
of $V_2$ (resp.\ $V_3$).
For $i = 2,3$, we define a quadratic form $Q_i$ on $V_i$ by
\[ Q_i(ae_i + bf_i) := a^2 + ab + b^2 \]
for $a,b \in \F_2$.
The Arf invariant of $Q_i$ is 1.
Hence the Arf invariant of the direct sum
\[ Q := Q_1 \oplus Q_2 \oplus Q_3 \]
is 1.
We denote the orthogonal group associated with the quadratic form
$Q_1$ (resp.\ $Q_2 \oplus Q_3, Q$)
by $\mathrm{O}(Q_1)$ (resp.\ $\mathrm{O}(Q_2 \oplus Q_3), \mathrm{O}(Q)$).
The product $\mathrm{O}(Q_1) \times \mathrm{O}(Q_2 \oplus Q_3)$
is a subgroup of $\mathrm{O}(Q)$.

Now we shall apply Lemma \ref{Sublemma:GroupTheoreticLemma} to $Q_1$.
We get two elements $\sigma, \tau \in \mathrm{O}(Q_1)$.
We define $\eta \in \mathrm{O}(Q_2 \oplus Q_3)$ by
\[ \eta(e_2) := e_3, \quad \eta(f_2) := f_3, \quad \eta(e_3) := e_2, \quad \eta(f_3) := f_2. \]
Let
$G := \langle (\sigma,\id), (\tau,\eta) \rangle$
be the subgroup of $\mathrm{O}(Q)$
generated by $(\sigma,\id)$ and $(\tau,\eta)$.

We shall show that $G$ satisfies the conditions
(\ref{Lemma:GroupTheoreticLemma:1}),
(\ref{Lemma:GroupTheoreticLemma:2})
of this lemma.
The following are well known (\cite[Lemma 1]{Dye}):
\begin{itemize}
\item For each vector $v \in \F_2^{\oplus 2m}$, the map
\[ Q_v \colon \F_2^{\oplus 2m} \longrightarrow \F_2,
   \quad x \mapsto Q_v(x) := Q(x) + \langle x,v \rangle \]
is a quadratic form on $\F_2^{\oplus 2m}$
with polar form $\langle,\rangle$.

\item Every quadratic form on $\F_2^{\oplus 2m}$ with polar form $\langle,\rangle$ can be written
as $Q_v$ for a unique vector $v \in \F_2^{\oplus 2m}$.

\item The Arf invariant of $Q_v$ is $0$ (resp.\ $1$)
if and only if $Q(v) = 1$ (resp.\ $0$).

\item For $g \in \mathrm{O}(Q)$, we see that
$g \cdot Q_v = Q_v$ if and only if $gv = v$
because
\[ (g \cdot Q_v)(x) = Q_v(g^{-1} x)
  = Q(g^{-1} x) + \langle g^{-1} x,\, v \rangle
   = Q(x) + \langle x,\, g v \rangle. \]
\end{itemize}

In order to check the condition (\ref{Lemma:GroupTheoreticLemma:1}),
it is enough to prove that there does not exist a vector
$v \in \F_2^{\oplus 2m}$ with $Q(v) = 1$ which is fixed by all elements of $G$.
Since $\sigma$ has no non-zero fixed vector in $V_1$,
a vector $v \in \F_2^{\oplus 2m}$ fixed by all elements of $G$ is
necessarily of the form
\[ v = a(e_2 + e_3) + b (f_2 + f_3) \]
for some $a,b \in \F_2$.
Since
\[ (Q_2 \oplus Q_3)(v) = (a^2 + ab + b^2) + (a^2 + ab + b^2) = 0, \]
there does not exist a vector $v \in \F_2^{\oplus 2m}$ with $Q(v) = 1$ fixed by
all elements of $G$.

We shall check the condition (\ref{Lemma:GroupTheoreticLemma:2}).
Any element $g \in G$ is either of the form $(\sigma^i,\id)$ or $(\tau \sigma^i,\eta)$.
If $g = (\sigma^i,\id)$,
all vectors in $V_2 \oplus V_3$ are fixed by $g$.
Hence $g$ fixes $e_2$, which satisfies $Q_2(e_2) = 1$.
If $g = (\tau \sigma^i,\eta)$,
by Lemma \ref{Sublemma:GroupTheoreticLemma} (\ref{Sublemma:GroupTheoreticLemma:Condition5}),
there is a vector $v \in V_1$ satisfying $Q_1(v) = 1$ and $g v = v$.
Hence $G$ satisfies the condition (\ref{Lemma:GroupTheoreticLemma:2}).
\end{proof}

\begin{rem}
\label{Lemma:RemarkSp(6,2):GroupTheoreticLemma}
There are many subgroups
$G \subset \Sp_{2m}(\F_2)$
satisfying the conditions
(\ref{Lemma:GroupTheoreticLemma:1}), (\ref{Lemma:GroupTheoreticLemma:2})
in Lemma \ref{Lemma:GroupTheoreticLemma}.
When $m=3$, a quick search using \texttt{GAP} (version 4.7.5) shows that
there are 1369 subgroups of $\Sp_{6}(\F_2)$, up to conjugacy.
Among them, 411 subgroups, up to conjugacy,
satisfy the conditions
(\ref{Lemma:GroupTheoreticLemma:1}), (\ref{Lemma:GroupTheoreticLemma:2})
in Lemma \ref{Lemma:GroupTheoreticLemma}.
The subgroup $G$ constructed in the proof of Lemma \ref{Lemma:GroupTheoreticLemma}
is a unique subgroup of $\Sp_{6}(\F_2)$ of order 6, up to conjugacy,
satisfying the conditions
(\ref{Lemma:GroupTheoreticLemma:1}), (\ref{Lemma:GroupTheoreticLemma:2})
in Lemma \ref{Lemma:GroupTheoreticLemma}.
\end{rem}

\begin{rem}
\label{Lemma:GroupTheoreticLemma:Falsem=1}
Lemma \ref{Lemma:GroupTheoreticLemma} does not hold for $m=1$.
On the $\F_2$-vector space $\F_2^{\oplus 2}$ with a standard alternating form,
there are three quadratic forms of Arf invariant $0$.
There is a unique quadratic form of Arf invariant $1$.
We denote it by $Q$.
Quadratic forms on $\F_2^{\oplus 2}$ are written as $Q_v$ for $v \in \F_2^{\oplus 2}$.
Quadratic forms of Arf invariant 0 correspond to non-zero vectors in $\F_2^{\oplus 2}$.
By Lemma \ref{LemmaGL(2,2)},
we see that no subgroup
$G \subset \Sp_{2}(\F_2) = \SL_{2}(\F_2) = \GL_{2}(\F_2)$
satisfies the conditions
(\ref{Lemma:GroupTheoreticLemma:1}), (\ref{Lemma:GroupTheoreticLemma:2})
in Lemma \ref{Lemma:GroupTheoreticLemma}.
This explains why
the local-global principle for the existence of symmetric determinantal
representations holds true for cubics
(cf.\ Theorem \ref{MainTheorem:LocalGlobal}
(\ref{MainTheorem:LocalGlobal:Cubic})),
but it does not hold true for quartics.
\end{rem}

\begin{rem}
Lemma \ref{Lemma:GroupTheoreticLemma} holds true for $m=2$.
Using \texttt{GAP} (version 4.7.5), we see that
there are 56 subgroups of
$\Sp_{4}(\F_2) \cong \mathfrak{S}_6$,
up to conjugacy.
Among them, 12 subgroups, up to conjugacy,
satisfy the conditions
(\ref{Lemma:GroupTheoreticLemma:1}), (\ref{Lemma:GroupTheoreticLemma:2})
in Lemma \ref{Lemma:GroupTheoreticLemma}.
However, the case of $m=2$ is not related to the problem of symmetric determinantal representations
because the genus of a smooth plane curve cannot be equal to 2.
\end{rem}

\subsection{A sufficient condition to violate the local-global principle}
\label{Subsection:Criterion}

We give a sufficient condition for a smooth plane curve
to violate the local-global principle
in terms of the associated mod $2$ Galois representation.

Let $K$ be a global field of characteristic different from two,
and $C \subset \PP^2_K$ be a smooth plane curve of degree $n \geq 4$.
The Jacobian variety $\Jac(C)$ is an abelian variety of dimension $(n-1)(n-2)/2$.
Since $\chara K \neq 2$,
the multiplication-by-2 isogeny
$[2] \colon \Jac(C) \longrightarrow \Jac(C)$
is \'etale, and all $\overline{K}$-rational $2$-torsion points on $\Jac(C)$
are defined over $K^{\mathrm{sep}}$.
Hence $\Jac(C)[2](K^{\mathrm{sep}})$
is an $\F_2$-vector space of dimension $(n-1)(n-2)$.
Since the action of $\Gal(K^{\mathrm{sep}}/K)$ on $\Jac(C)[2](K^{\mathrm{sep}})$
preserves the Weil pairing $e_2$, by choosing a symplectic $\F_2$-basis,
we have the associated mod $2$ Galois representation
\[ \rho_{C,2} \colon \Gal(K^{\mathrm{sep}}/K) \ \longrightarrow \ \Sp_{(n-1)(n-2)}(\F_2). \]
We fix an embedding
$\iota_v \colon K^{\mathrm{sep}} \hookrightarrow K_v^{\mathrm{sep}}$
for each place $v$ of $K$.
We consider
$\Gal(K_v^{\mathrm{sep}}/K_v)$ as a closed subgroup of $\Gal(K^{\mathrm{sep}}/K)$.
The embedding
\[ \Gal(K_v^{\mathrm{sep}}/K_v) \hookrightarrow \Gal(K^{\mathrm{sep}}/K) \]
is unique up to conjugation.

\begin{prop}
\label{Proposition:CounterexampleSufficientCondition}
Assume that {\em at least one} of the following conditions is satisfied:
\begin{enumerate}
\renewcommand{\labelenumi}{(\alph{enumi})}
\renewcommand{\theenumi}{{\rm\alph{enumi}}}
\item \label{Proposition:CounterexampleSufficientCondition:ExistenceLocalRationalPoints}
  $C$ has a $K_v$-rational point for each place $v$ of $K$, or

\item \label{Proposition:CounterexampleSufficientCondition:OddDegree}
  the degree $n$ is odd.
\end{enumerate}
Moreover, assume that {\em all} of the following conditions are satisfied:
\begin{enumerate}
\setcounter{enumi}{2}
\renewcommand{\labelenumi}{(\alph{enumi})}
\renewcommand{\theenumi}{{\rm\alph{enumi}}}
\item \label{Proposition:CounterexampleSufficientCondition:GaloisImageCondition}
  the image of $\rho_{C,2}$
  satisfies the conditions
  (\ref{Lemma:GroupTheoreticLemma:1}), (\ref{Lemma:GroupTheoreticLemma:2})
  in Lemma \ref{Lemma:GroupTheoreticLemma},

\item \label{Proposition:CounterexampleSufficientCondition:GaloisImageLocalCyclicity}
  the image $\rho_{C,2}\big( \Gal(K_v^{\mathrm{sep}}/K_v) \big)$ is a cyclic group
  for each place $v$ of $K$, and

\item \label{Proposition:CounterexampleSufficientCondition:EvenThetaNonEffective}
  all even theta characteristics on $C_{K^{\mathrm{sep}}}$ are non-effective.
\end{enumerate}
Then $C$ admits a symmetric determinantal representation over $K_v$ for each place $v$ of $K$,
and $C$ does not admit a symmetric determinantal representation over $K$.
\end{prop}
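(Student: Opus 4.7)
The plan is to reformulate both the local and global problems in terms of $G$-invariant quadratic forms on $\Jac(C)[2](K^{\mathrm{sep}})$ via Theorem \ref{ExistenceCriterion} and Proposition \ref{Proposition:MumfordBijection}, and then deduce each direction directly from the corresponding half of Lemma \ref{Lemma:GroupTheoreticLemma}. Throughout, write $G := \rho_{C,2}(\Gal(K^{\mathrm{sep}}/K))$ and, for a place $v$, $G_v := \rho_{C,2}(\Gal(K_v^{\mathrm{sep}}/K_v)) \subset G$.

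For the global non-existence, suppose for contradiction that $C$ admits a symmetric determinantal representation over $K$. By Theorem \ref{ExistenceCriterion} there is a non-effective theta characteristic $\mathcal{L}$ on $C$. Since $\dim_K H^0(C,\mathcal{L}) = 0$ is even, the base change $\mathcal{L}_{K^{\mathrm{sep}}}$ is an even theta characteristic on $C_{K^{\mathrm{sep}}}$ whose class in $\Pic(C_{K^{\mathrm{sep}}})$ is fixed by $\Gal(K^{\mathrm{sep}}/K)$. By Proposition \ref{Proposition:MumfordBijection}, the associated quadratic form $Q_{\mathcal{L}_{K^{\mathrm{sep}}}}$ has Arf invariant $0$, and by the Galois-equivariance of that bijection it is $G$-fixed. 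This contradicts condition (\ref{Lemma:GroupTheoreticLemma:1}) of Lemma \ref{Lemma:GroupTheoreticLemma}.

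For the local existence at a place $v$, use cyclicity (d) to choose a generator $g_v$ of $G_v$. Condition (\ref{Lemma:GroupTheoreticLemma:2}) of Lemma \ref{Lemma:GroupTheoreticLemma}, applied to $g_v \in G$, produces a quadratic form $Q_v$ of Arf invariant $0$, with polar form the Weil pairing, fixed by $g_v$ and hence by all of $G_v$. Applying Proposition \ref{Proposition:MumfordBijection} to $C_{K_v}$ (using $\iota_v$ to identify $\Jac(C_{K_v})[2](K_v^{\mathrm{sep}})$ with $\Jac(C)[2](K^{\mathrm{sep}})$), the form $Q_v$ corresponds to an even theta characteristic $\mathcal{L}_v$ on $C_{K_v^{\mathrm{sep}}}$ whose class is $\Gal(K_v^{\mathrm{sep}}/K_v)$-fixed. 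Under assumption (a) of the proposition, condition (\ref{Corollary:MumfordBijectionGaloisEquivariance:ExistenceRationalPoint}) of Corollary \ref{Corollary:MumfordBijectionGaloisEquivariance} holds; under assumption (b), Bezout applied to a line in $\PP^2_{K_v}$ yields a closed point of $C_{K_v}$ of odd degree, so condition (\ref{Corollary:MumfordBijectionGaloisEquivariance:ExistenceRationalPointOddDegree}) holds. Either way, $\mathcal{L}_v$ descends to a theta characteristic on $C_{K_v}$. The base-change map from theta characteristics on $C_{K^{\mathrm{sep}}}$ to those on $C_{K_v^{\mathrm{sep}}}$ is a bijection (injective by Remark \ref{RemarkPicardFunctor} (\ref{RemarkPicardFunctor:Injection}), hence bijective since both torsors for $\Jac(C)[2]$ have the same finite size) and preserves effectiveness by Proposition \ref{Proposition:ThetaDescent}; hence assumption (e) transfers to $C_{K_v^{\mathrm{sep}}}$, and both $\mathcal{L}_v$ and its descent to $C_{K_v}$ are non-effective. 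Theorem \ref{ExistenceCriterion} then yields the required local symmetric determinantal representation.

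The main bookkeeping obstacle is verifying that the notions of ``even'' and ``non-effective'' match compatibly under the various base changes $K \hookrightarrow K^{\mathrm{sep}} \hookrightarrow K_v^{\mathrm{sep}}$ and $K \hookrightarrow K_v \hookrightarrow K_v^{\mathrm{sep}}$, and that Proposition \ref{Proposition:MumfordBijection} applied to $C$ and to $C_{K_v}$ yields compatible bijections on the identified $2$-torsion subgroup. Once these identifications are in place, each direction is essentially a one-line application of the appropriate half of Lemma \ref{Lemma:GroupTheoreticLemma}, with assumptions (a)/(b) entering only to select which clause of Corollary \ref{Corollary:MumfordBijectionGaloisEquivariance} to invoke for the local descent.
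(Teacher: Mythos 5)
Your proposal is correct and follows essentially the same route as the paper's proof: local existence via condition (d) plus clause (b) of Lemma \ref{Lemma:GroupTheoreticLemma}, Proposition \ref{Proposition:MumfordBijection}, and Corollary \ref{Corollary:MumfordBijectionGaloisEquivariance} (selecting clause (\ref{Corollary:MumfordBijectionGaloisEquivariance:ExistenceRationalPoint}) or (\ref{Corollary:MumfordBijectionGaloisEquivariance:ExistenceRationalPointOddDegree}) according to hypothesis (a) or (b)), and global non-existence from clause (a) of the group-theoretic lemma. Your extra bookkeeping step --- identifying theta characteristics on $C_{K^{\mathrm{sep}}}$ with those on $C_{K_v^{\mathrm{sep}}}$ so that hypothesis (e) applies to the local theta characteristic --- is a point the paper leaves implicit, and it is handled correctly.
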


\begin{proof}
Let $v$ be a place of $K$.
Note that
\[ \Jac(C)[2](K^{\mathrm{sep}}) = \Jac(C)[2](K_v^{\mathrm{sep}}) \]
because $\Jac(C)[2]$ is a finite \'etale group scheme over $K$.

By the conditions
(\ref{Proposition:CounterexampleSufficientCondition:GaloisImageCondition}),
(\ref{Proposition:CounterexampleSufficientCondition:GaloisImageLocalCyclicity}),
there is a quadratic form on $\Jac(C)[2](K_v^{\mathrm{sep}})$ with Arf invariant 0
fixed by $\Gal(K_v^{\mathrm{sep}}/K_v)$
(see the condition (\ref{Lemma:GroupTheoreticLemma:2})
in Lemma \ref{Lemma:GroupTheoreticLemma}).
Proposition \ref{Proposition:MumfordBijection} applied to $C_{K_v}$
shows that there is an even theta characteristic $\mathcal{L}_{K_v^{\mathrm{sep}}}$
on $C_{K_v^{\mathrm{sep}}}$ such that
\[ [\mathcal{L}_{K_v^{\mathrm{sep}}}] \in \Pic_{C/K}(K_v^{\mathrm{sep}}) \]
is fixed by $\Gal(K_v^{\mathrm{sep}}/K_v)$.

If the condition
(\ref{Proposition:CounterexampleSufficientCondition:ExistenceLocalRationalPoints})
is satisfied,
$C_{K_v}$ satisfies the condition
(\ref{Corollary:MumfordBijectionGaloisEquivariance:ExistenceRationalPoint})
of Corollary \ref{Corollary:MumfordBijectionGaloisEquivariance}.
If the condition
(\ref{Proposition:CounterexampleSufficientCondition:OddDegree})
is satisfied,
there is a finite extension $M_v/K_v$ of odd degree such that
$C$ has an $M_v$-rational point.
Hence $C$ satisfies the condition
(\ref{Corollary:MumfordBijectionGaloisEquivariance:ExistenceRationalPointOddDegree})
of Corollary \ref{Corollary:MumfordBijectionGaloisEquivariance}.
In both cases,
$\mathcal{L}_{K_v^{\mathrm{sep}}}$ comes from an even theta characteristic on $C_{K_v}$
by Corollary \ref{Corollary:MumfordBijectionGaloisEquivariance}.
It is non-effective by the condition
(\ref{Proposition:CounterexampleSufficientCondition:EvenThetaNonEffective}).
Therefore, $C$ admits a symmetric determinantal representation over $K_v$
by Theorem \ref{ExistenceCriterion}.

Finally, by the condition
(\ref{Proposition:CounterexampleSufficientCondition:GaloisImageCondition}),
there does not exist
a quadratic form on $\Jac(C)[2](K^{\mathrm{sep}})$ with Arf invariant 0
fixed by $\Gal(K^{\mathrm{sep}}/K)$
(see the condition (\ref{Lemma:GroupTheoreticLemma:1})
in Lemma \ref{Lemma:GroupTheoreticLemma}).
Hence there does not exist an even theta characteristic on $C$.
By Theorem \ref{ExistenceCriterion},
$C$ does not admit a symmetric determinantal representation over $K$.
\end{proof}

\begin{rem}
\label{Proposition:Remark:CounterexampleSufficientCondition:n=4}
When $n=4$, the condition 
(\ref{Proposition:CounterexampleSufficientCondition:EvenThetaNonEffective})
is always satisfied because even theta characteristics on
smooth plane quartics are non-effective.
(See Remark \ref{Remark:EvenThetaCharacteristicQuartic}.)
\end{rem}

\subsection{Counterexamples to the local-global principle for quartics}
\label{Subsection:Counterexample}

\begin{proof}[of Theorem \ref{MainTheorem2}]
Recall that $K$ is a global field of characteristic different from two,
and $C \subset \PP^2_{K}$ a smooth plane quartic over $K$
such that the associated mod 2 Galois representation
\[ \rho_{C,2} \colon \Gal(K^{\mathrm{sep}}/K) \longrightarrow \Sp_{6}(\F_2) \]
is surjective.

We first claim that there is a finite separable extension $K'/K$ such that
$C$ has a $K'$-rational point and the restriction of
$\rho_{C,2}$ to $\Gal(K^{\mathrm{sep}}/K')$ is still surjective.
In some examples (e.g.,\ Remark \ref{Counterexample:Remark1}),
we can find a $K$-rational point explicitly,
and we can simply take $K = K'$.
But $C$ might not have a $K$-rational point in general.

There is a line $\ell \subset \PP^2_{K}$ such that
$\ell \cap C$ is smooth over $K$ by Bertini's theorem
(\cite[Corollaire 6.11 (2)]{Jouanolou}).
Since $\ell \cap C$ is defined by a separable polynomial of degree 4,
there is a separable extension $K'/K$ of degree $\leq 4$
such that $C$ has a $K'$-rational point.
Since 
$\Gal(K^{\mathrm{sep}}/K')$ is a closed subgroup of $\Gal(K^{\mathrm{sep}}/K)$
of index $\leq 4$, its image
\[ \rho_{C,2}\big( \Gal(K^{\mathrm{sep}}/K') \big) \subset \Sp_{6}(\F_2) \]
is a subgroup of index $\leq 4$.
Since $\Sp_{6}(\F_2)$ has no proper subgroup of index $\leq 4$,
the restriction of $\rho_{C,2}$ to $\Gal(K^{\mathrm{sep}}/K')$ is surjective.
(A proper subgroup of $\Sp_{6}(\F_2)$ of smallest index is
conjugate to the orthogonal group of a quadratic form with Arf invariant 1,
which has index 28.)
Replacing $K$ by $K'$, we may assume that $C$ has a $K$-rational point.

Next, we shall replace $K$ by a finite extension of it as follows.
We choose a subgroup
\[ G \subset \rho_{C,2}\big( \Gal(K^{\mathrm{sep}}/K) \big) = \Sp_{6}(\F_2) \]
satisfying the conditions in Lemma \ref{Lemma:GroupTheoreticLemma}.
There are 411 subgroups of $\Sp_{6}(\F_2)$ with this property, up to conjugacy
(Remark \ref{Lemma:RemarkSp(6,2):GroupTheoreticLemma}).
We may choose any one of them.
We take a finite separable extension $M/K$ such that
$\Gal(K^{\mathrm{sep}}/M) = \rho_{C,2}^{-1}(G)$.

By Remark \ref{Proposition:Remark:CounterexampleSufficientCondition:n=4},
the smooth plane quartic $C_M$ over $M$ satisfies the conditions
(\ref{Proposition:CounterexampleSufficientCondition:ExistenceLocalRationalPoints}),
(\ref{Proposition:CounterexampleSufficientCondition:GaloisImageCondition}),
(\ref{Proposition:CounterexampleSufficientCondition:EvenThetaNonEffective})
of Proposition \ref{Proposition:CounterexampleSufficientCondition},
but it might not satisfy the condition
(\ref{Proposition:CounterexampleSufficientCondition:GaloisImageLocalCyclicity}).
Note that,
if $v$ is an archimedean place or a place where $\rho_{C,2}$ is unramified,
the image
\[ \rho_{C,2}\big( \Gal(M_v^{\mathrm{sep}}/M_v) \big) \subset \Sp_{6}(\F_2) \]
is a cyclic group generated
by the image of the complex conjugation or the geometric Frobenius element,
and the condition
(\ref{Proposition:CounterexampleSufficientCondition:GaloisImageLocalCyclicity})
of Proposition \ref{Proposition:CounterexampleSufficientCondition}
is satisfied at $v$.
Hence the number of places $v$ where 
the condition
(\ref{Proposition:CounterexampleSufficientCondition:GaloisImageLocalCyclicity})
of Proposition \ref{Proposition:CounterexampleSufficientCondition}
is not satisfied is finite.
We denote them by $v_1,\ldots,v_r$.
Since $\rho_{C,2}\big( \Gal(M_{v_i}^{\mathrm{sep}}/M_{v_i}) \big)$ is a finite non-cyclic group,
there is a finite extension $M'_{v_i}$ of $M_{v_i}$ such that
$\rho_{C,2}\big( \Gal(M_{v_i}^{\mathrm{sep}}/M'_{v_i}) \big)$ is cyclic.
Take a finite separable extension $M'/K$ such that
$M'/K$ is linearly disjoint from
$(K^{\mathrm{sep}})^{\Ker \rho_{C,2}}$
and $(M' M)_{w_i}$ contains all conjugates of $M'_{v_i}$ for all $i$,
where $w_i$ is a place of the composite $M' M$ above $v_i$.
The existence of such $M'/K$ is a consequence of Krasner's lemma
and the weak approximation theorem.

We put $L := M' M$.
The smooth plane quartic $C_L$ over $L$ satisfies
the conditions
(\ref{Proposition:CounterexampleSufficientCondition:ExistenceLocalRationalPoints}),
(\ref{Proposition:CounterexampleSufficientCondition:GaloisImageCondition}),
(\ref{Proposition:CounterexampleSufficientCondition:GaloisImageLocalCyclicity}),
(\ref{Proposition:CounterexampleSufficientCondition:EvenThetaNonEffective})
of Proposition \ref{Proposition:CounterexampleSufficientCondition}.
Hence $C_L$ is a desired counterexample to the local-global principle
by Proposition \ref{Proposition:CounterexampleSufficientCondition}.

The proof of Theorem \ref{MainTheorem2} is complete.
\end{proof}

The above proof shows that
the assumption on the surjectivity of the associated mod 2 Galois representation
can be weakened if $C$ has a $K$-rational point.

\begin{cor}
\label{Corollary:MainTheorem2:RationalPoint}
Let $C \subset \PP^2_K$ be a smooth plane quartic over a global field $K$
of characteristic different from two
satisfying both of the following conditions:
\begin{enumerate}
\renewcommand{\labelenumi}{(\alph{enumi})}
\renewcommand{\theenumi}{{\rm\alph{enumi}}}
\item $C$ has a $K$-rational point, and
\item the image of the associated mod 2 Galois representation $\rho_{C,2}$ contains
a subgroup of $\Sp_{6}(\F_2)$
satisfying the conditions in Lemma \ref{Lemma:GroupTheoreticLemma}.
\end{enumerate}
Then there is a finite separable extension $L/K$
such that $C$ admits a symmetric determinantal representation over $L_w$ for each place $w$ of $L$,
and $C$ does not admit a symmetric determinantal representation over $L$.
\end{cor}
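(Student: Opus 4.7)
The plan is to essentially replay the second half of the proof of Theorem \ref{MainTheorem2}, bypassing its opening Bertini step: that step was only needed to produce a rational point, which hypothesis (a) now supplies for free. I expect the work to reduce to arranging all four hypotheses of Proposition \ref{Proposition:CounterexampleSufficientCondition} after one carefully chosen finite extension $L/K$.

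First I would pick a subgroup $G$ of the image of $\rho_{C,2}$ satisfying the two conditions of Lemma \ref{Lemma:GroupTheoreticLemma}; this exists by hypothesis (b). Define $M/K$ to be the finite separable extension with $\Gal(K^{\mathrm{sep}}/M) = \rho_{C,2}^{-1}(G)$. Then $C_M$ has an $M$-rational point (by (a)), so condition (\ref{Proposition:CounterexampleSufficientCondition:ExistenceLocalRationalPoints}) of Proposition \ref{Proposition:CounterexampleSufficientCondition} holds; the image of the restricted representation is exactly $G$, so condition (\ref{Proposition:CounterexampleSufficientCondition:GaloisImageCondition}) holds; and every even theta characteristic on $(C_M)_{M^{\mathrm{sep}}}$ is non-effective by Remark \ref{Remark:EvenThetaCharacteristicQuartic}, giving condition (\ref{Proposition:CounterexampleSufficientCondition:EvenThetaNonEffective}). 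Only the local cyclicity condition (\ref{Proposition:CounterexampleSufficientCondition:GaloisImageLocalCyclicity}) may still fail.

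Next I would enlarge $M$ to enforce (\ref{Proposition:CounterexampleSufficientCondition:GaloisImageLocalCyclicity}). At archimedean places and at places where $\rho_{C,2}$ is unramified, the local image is automatically cyclic, so only finitely many places $v_1, \dots, v_r$ of $M$ are problematic. For each $v_i$ I can choose a finite extension $M'_{v_i}/M_{v_i}$ making the local image $\rho_{C,2}\bigl(\Gal(M_{v_i}^{\mathrm{sep}}/M'_{v_i})\bigr)$ cyclic. The key global step, as in the proof of Theorem \ref{MainTheorem2}, is to use Krasner's lemma together with the weak approximation theorem to produce a finite separable extension $M'/K$ that is linearly disjoint from $(K^{\mathrm{sep}})^{\Ker \rho_{C,2}}$ and whose completion at a place $w_i$ above $v_i$ contains every $K$-conjugate of $M'_{v_i}$. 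Setting $L := M'M$ then makes $C_L$ satisfy all four hypotheses of Proposition \ref{Proposition:CounterexampleSufficientCondition}, and the corollary follows.

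The main obstacle is this last step: simultaneously cyclicizing the Galois image at every bad place by a single global extension without disturbing condition (\ref{Proposition:CounterexampleSufficientCondition:GaloisImageCondition}). The linear disjointness from $(K^{\mathrm{sep}})^{\Ker \rho_{C,2}}$ is precisely what guarantees that the image of $\rho_{C,2}$ is unchanged after passing from $K$ to $M'$, so $G$ persists inside the image after forming $L = M'M$; the remaining verification is just the bookkeeping of Krasner and weak approximation already carried out in the proof of Theorem \ref{MainTheorem2}.
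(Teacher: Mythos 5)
Your proposal is correct and is exactly the argument the paper intends: the paper proves this corollary by observing that the proof of Theorem \ref{MainTheorem2} goes through verbatim once the Bertini/index-$\leq 4$ step (whose only purpose was to secure a rational point while preserving a suitable Galois image) is replaced by hypotheses (a) and (b). Your choice of $M$ via $\Gal(K^{\mathrm{sep}}/M) = \rho_{C,2}^{-1}(G)$ and the subsequent Krasner/weak-approximation step to enforce local cyclicity match the paper's construction exactly.
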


We shall prove Corollary \ref{Corollary:MainTheorem2}
by combining Theorem \ref{MainTheorem2} and the results of Harris, and Shioda.

\begin{proof}[of Corollary \ref{Corollary:MainTheorem2}]
It is known that the associated mod 2 Galois representation of the generic family
of plane quartics over a prime field of characteristic different from
$2,3,5,7,11,29,1229$
is surjective (\cite[p.\ 721]{HarrisGalois}, \cite[Theorem 7]{Shioda}).
Since global fields are Hilbertian (\cite[Theorem 13.3.5]{FriedJarden}),
applying Hilbert's irreducibility theorem, we obtain
infinitely many smooth plane quartics over $\Q$ or $\F_p(T)$ (for $p \neq 2,3,5,7,11,29,1229$)
whose associated mod $2$ Galois representations are surjective.
Hence the assertion of Corollary \ref{Corollary:MainTheorem2} follows from Theorem \ref{MainTheorem2}.
\end{proof}

\begin{rem}
The condition ``$p \neq 2,3,5,7,11,29,1229$''
comes from the condition in Shioda's paper (\cite[Theorem 7]{Shioda}).
In \cite[p.\ 68]{Shioda},
it is alluded that the restriction on the characteristics can be relaxed.
If \cite[Theorem 7]{Shioda} holds in characteristic $p \in \{ 3,5,7,11,29,1229 \}$,
Corollary \ref{Corollary:MainTheorem2} also holds in that characteristic.
The case of characteristic two is completely different (\cite{IshitsukaItoCharacteristic2}).
\end{rem}

\subsection{Concluding remarks}

\begin{rem}
\label{Counterexample:Remark1}
It is possible to take explicit smooth plane quartics
satisfying the conditions of Theorem \ref{MainTheorem2}.
When $K = \Q$, we may take $C$ to be a smooth plane quartic defined by the equation
\[
X_0 X_2^3 + X_2 (X_0^3 + X_0^2 X_1 + X_1^3) + X_0^4 + X_0^3 X_1 + X_0^2 X_1^2 + X_1^4 = 0,
\]
or the equation
\[
X_0^2 X_1^2 - X_0 X_1^3 - X_0^3 X_2 - 2 X_0^2 X_2^2 + X_1^2 X_2^2 - X_0 X_2^3 + X_1 X_2^3 = 0.
\]
The first equation is taken from \cite{Shioda},
and the second one is taken from \cite{BruinPoonenStoll}, \cite{Bruin}.
Note that the quartics defined by the above equations have $\Q$-rational points
such as $(0,0,1)$.
\end{rem}

\begin{rem}
\label{Counterexample:Remark2}
In principle,
the extension $L/K$ in the proof of Theorem \ref{MainTheorem2} can be
taken explicitly if we can calculate the associated mod 2 Galois representations.
The required condition on $M'/K$ can be seen from the ramification of $\rho_{C,2}$.
For example, let us consider the smooth plane quartic $C$ over $\Q$ defined by
the following equation (\cite{BruinPoonenStoll}, \cite{Bruin}):
\[
X_0^2 X_1^2 - X_0 X_1^3 - X_0^3 X_2 - 2 X_0^2 X_2^2 + X_1^2 X_2^2 - X_0 X_2^3 + X_1 X_2^3 = 0.
\]
The extension
$\overline{\Q}^{\Ker \rho_{C,2}}/\Q$
is a Galois extension with Galois group
$\Sp_6(\F_2)$ of degree $1451520$ ramified only at $2,41,347$
(\cite[12.9.2]{BruinPoonenStoll}).
If we choose $G \subset \Sp_{6}(\F_2)$ to be the subgroup of order 6
constructed in the proof of Lemma \ref{Lemma:GroupTheoreticLemma},
$M/\Q$ is a subextension of $\overline{\Q}^{\Ker \rho_{C,2}}/\Q$ of
degree $241920$ with
$\Gal\big( \overline{\Q}^{\Ker \rho_{C,2}}/M \big) = G$.
If we take an extension $M'/\Q$ such that
$M'$ is linearly disjoint from $\overline{\Q}^{\Ker \rho_{C,2}}$ and
$\big( M' \, \overline{\Q}^{\Ker \rho_{C,2}} \big)/(M' M)$
is unramified at all places above $2,41,347$,
the composite $L = M' M$ satisfies all the required conditions,
and $C_L$ is a counterexample to the local-global principle.
It is possible to reduce the degree $[M : \Q]$ if you prefer.
According to calculations done by \texttt{GAP} (version 4.7.5),
the largest subgroup of $\Sp_{6}(\F_2)$ satisfying
the conditions in Lemma \ref{Lemma:GroupTheoreticLemma} is of order $1440$.
(There are exactly two subgroups of $\Sp_{6}(\F_2)$ of order $1440$, up to conjugacy.
Both are isomorphic to $\Z/2\Z \times \mathfrak{S}_6$.
Only one of them has the required properties.)
If we take this group as $G$, 
we reduce the extension degree $[M : \Q]$ to $1008$.
\end{rem}

\begin{rem}
\label{Counterexample:Remark3}
The degree $[L : K]$ of the extension $L/K$ in the proof of Theorem \ref{MainTheorem2} is rather large.
It is an interesting problem to construct counterexamples to the local-global principle
over smaller global fields such as $\Q$ or $\F_p(T)$ for $p \neq 2$.
We expect that counterexamples to the local-global principle exist
over $\Q$ or $\F_p(T)$ for $p \neq 2$
in any degree $n \geq 4$ because our group theoretic lemma
(Lemma \ref{Lemma:GroupTheoreticLemma}) holds for any $m \geq 3$
and Proposition \ref{Proposition:CounterexampleSufficientCondition} holds for any $n \geq 4$.
Finding counterexamples to the local-global principle in higher degree
seems a challenging computational problem
because an algorithm to calculate the associated mod 2 Galois representation of
smooth plane curves of higher degree is yet to be developed.
(See \cite{BruinPoonenStoll}, \cite{Bruin} for recent results on
explicit calculations for smooth plane quartics.)
\end{rem}

\subsection*{Acknowledgements}

The work of the first author was supported by JSPS KAKENHI Grant Number 13J01450.
The work of the second author 
was supported by JSPS KAKENHI Grant Number 20674001 and 26800013.
Some calculations done by \texttt{GAP} (version 4.7.5)
on subgroups of the finite symplectic group $\Sp_{2m}(\F_2)$
were very helpful
when we studied the action of Galois groups on theta characteristics.
The authors would like to thank an anonymous referee
for their useful comments and suggestions.

\end{document}